\newcommand{\e}{{\bf e}}
\newcommand{\ux}{{\underline{x}}}
\newcommand{\p}{{\partial}}
\newcommand{\gso}{\mathfrak{so}}
\newtheorem{thm}{Theorem}
\newtheorem{lemma}{Lemma}
\newtheorem{example}[thm]{Example}
\newtheorem{rem}{Remark}
\newtheorem{defi}{Definition}
\newtheorem{cor}{Corollary}
\title{Spinorspaces in discrete Clifford analysis}
\date{}
\author{H. De Ridder\footnote{Ghent University, Department of Mathematical Analysis, Building S22, Galglaan 2, 9000 Gent, Belgium, email: hdr@cage.ugent.be}, \ T. Raeymaekers\footnote{Ghent University, Department of Mathematical Analysis, Building S22, Galglaan 2, 9000 Gent, Belgium, email: tr@cage.ugent.be}}
\begin{document}

\maketitle

\begin{abstract}
In this paper we work in the `split' discrete Clifford analysis setting, i.e. the $m$-dimensional function theory concerning null-functions, defined on the grid $\mathbb{Z}^m$, of the discrete Dirac operator $\p$, involving both forward and backward differences, which factorizes the (discrete) Star-Laplacian ($\Delta^\ast = \p^2$). We show how the space $\mathcal{M}_k$ of discrete homogeneous spherical monogenics of degree $k$, is decomposable into $2^{2m-n}$ isomorphic irreducible representations with highest weight $\left(k + \frac{1}{2}, \frac{1}{2},\ldots,\frac{1}{2}\right)$ in the odd-dimensional case and two times $2^{2m-n}$ isomorphic irreducible representations with highest weight $(k)'_+ = \left(k + \frac{1}{2}, \frac{1}{2},\ldots,\frac{1}{2},\frac{1}{2}\right)$ resp. $(k)'_- = \left(k + \frac{1}{2}, \frac{1}{2},\ldots,\frac{1}{2},-\frac{1}{2}\right)$ in the even dimensional case. 

\bigskip MSC 2010: 17B15, 47A67, 20G05, 15A66, 39A12

\smallskip Keywords: discrete Clifford analysis, irreducible representation, orthogonal Lie algebra, monogenic functions

\end{abstract}

\section{Introduction}
In classical Clifford analysis, the infinitesimal `rotations' are given by the angular momentum operators $L_{a,b} = x_a \partial_{x_b} - x_b \partial_{x_a}$. These operators satisfy the commutation relations
$$
[L_{a,b}, L_{c,d}] =  \delta_{b,c} \, L_{a,d} - \delta_{b,d}\, L_{a, c} - \delta_{a, c} \, L_{b,d} + \delta_{a,d} \, L_{b, c},
$$
which are exactly the defining relations of the special orthogonal Lie algebra $\gso(m)$ and they form endomorphisms of the space $\mathcal{H}_k(m,\mathbb{C})$ of scalar-valued harmonic homogeneous polynomials, thus transforming the latter in an (irreducible) $\gso(m,\mathbb{C})$-representation. To establish $\mathcal{M}_k(m,\mathbb{S})$, i.e. the spinor-valued homogeneous monogenics of degree $k$, classically as $\gso(m,\mathbb{C})$-representation, the following operators are considered
$$
dR(e_{a,b}): \mathcal{M}_k(m,\mathbb{S}) \to \mathcal{M}_k(m,\mathbb{S}), \qquad M_k \mapsto \left( L_{a,b} + \frac{1}{2}Ê\, e_a\,e_b \right) M_k. 
$$
These operators are endomorphisms of the space of spinor-valued $k$-homogeneous polynomials which also satisfy the defining relations of $\gso(m,\mathbb{C})$:
$$
\left[ dR(e_{a,b}), dR(e_{c,d}) \right] =  \delta_{b,c} \, dR(e_{a,d}) - \delta_{b,d}\, dR(e_{a, c}) - \delta_{a, c} \, dR(e_{b,d}) + \delta_{a,d} \, dR(e_{b, c}).
$$

\medskip In \cite{Rotations}, we developed similar operators in the discrete Clifford analysis setting: the angular momentum operators are discrete operators $L_{a,b} = \xi_a \, \p_b + \xi_b \, \p_a$, $a \neq b$. For $a = b$, we define $L_{aa} = 0$. Then the operators $\Omega_{a,b}$, acting on discrete functions $f$ as $\Omega_{a,b} \, f = L_{a,b}\, f \, e_b \, e_a$, satisfy the defining relations of the special lie algebra $\mathfrak{so}(m)$:
$$
\left[ \Omega_{a,b}, \Omega_{c,d} \right] = \delta_{b,c} \, \Omega_{a,d} - \delta_{b,d}\, \Omega_{a,c} - \delta_{a,c} \, \Omega_{b,d} + \delta_{a,d} \, \Omega_{b,c}.
$$
Furthermore, they are endomorphisms of the space $\mathcal{H}_k$ of Clifford-algebra valued homogeneous harmonics of degree $k$, since $\Omega_{a,b}$ commutes with $\frak{sl}_2 = \left\{ \Delta, \xi^2, \mathbb{E} + \frac{m}{2} \right\}$, $\forall \, (a, b)$. In \cite{paperIsaac}, we showed that $\mathcal{H}_k$ is the sum of $2^{2m}$ isomorphic copies of the irreducible representation of $\gso(m,\mathbb{C})$ with highest weight $(k,0,\ldots,0)$.

\medskip The discrete Dirac operator $\p$ is however not invariant under the operators $\Omega_{a,b}$, hence $\mathcal{M}_k$ cannot be expressed as $\gso(m,\mathbb{C})$-representation by means of these operators. Therefore, we considered in \cite{Rotations} the operators $L_{a,b} - \dfrac{1}{2}$ and the four-vector $V_{a,b} = e_a\, e_b \,e_a^\perp \, e_b^\perp = - e_a^\perp \,e_a\,e_b^\perp \, e_b$. Let the operator $dR(e_{a,b})$, $a \neq b$, act on discrete functions $f$ as 
$$
dR(e_{a,b})\, f = V_{a,b} \left( L_{a,b} - \frac{1}{2}\right) f \, e_a^\perp \, e_b^\perp.
$$
For $a = b$, we defined $dR(e_{a,a}) = 0$. The operators $dR(e_{a,b})$ satisfy the defining relations of the special lie algebra $\mathfrak{so}(m)$:
$$
\left[ dR(e_{a,b}), dR(e_{c,d}) \right] = \delta_{b,c} \, dR(e_{a,d}) - \delta_{b,d}\, dR(e_{a,c}) - \delta_{a,c} \, dR(e_{b,d}) + \delta_{a,d} \, dR(e_{b,c}),
$$
and commute with $\frak{osp}(1 | 2) = \left\{ \p, \xi, \mathbb{E}Ê+ \frac{m}{2}Ê\right\}$ which makes them endomorphisms of the space of $k$-homogeneous discrete monogenic polynomials. As such, the space $\mathcal{M}_k$ of $k$-homogeneous Clifford-valued monogenic polynomials is a reducible $\gso(m,\mathbb{C})$-representation. In \cite{Rotations}, it was already suggested that $\mathcal{M}_k$ can be decomposed into irreducible parts of highest weight $\left(k + \frac{1}{2}, \frac{1}{2},\ldots,\frac{1}{2}\right)$ resp. $\left(k + \frac{1}{2}, \frac{1}{2},\ldots,-\frac{1}{2}\right)$, but this was left as open conjecture. In the following sections, we will show how this decomposition is done exactly.

\section{Preliminaries}
Let $\mathbb{R}^m$ be the $m$-dimensional Euclidian space with orthonormal basis $e_j$, $j=1,\dots,m$ and consider the Clifford algebra $\mathbb{R}_{m,0}$ over $\mathbb{R}^m$. Passing to the so-called `split' discrete setting \cite{frame,SW}, we imbed the Clifford algebra $\mathbb{R}_{m,0}$ into the bigger complex one $\mathbb{C}_{2m,0}$, the underlying vector space of which has twice the dimension, and introduce forward and backward basis elements $ \textbf{e}_j^{\pm}$ satisfying the following anti-commutator rules:
$$
 \left\{ \e_j^-, \e_\ell^-\right\} = \left\{ \e_j^+, \e_\ell^+ \right\} = 0, \qquad 
 \left\{ \e_j^+, \e_\ell^- \right\} = \delta_{j \ell}, \qquad j,\,\ell= 1,\dots,m.
$$
The connection to the original basis $\e_j$ is given by $\e^+_j + \e^-_j = e_j$, $j=1,\ldots,m$. This implies $e_j^2=1$, in contrast to the usual Clifford setting where traditionally $e_j^2=-1$ is chosen. We will often denote $\e_j^+ \wedge \e_j^- = \e_j^+ \e_j^- - \e_j^- \e_j^+$, $j= 1,\ldots,m$. 

\medskip Now consider the standard equidistant lattice $\mathbb{Z}^m$; the coordinates of a Clifford vector $\ux$ will thus only take integer values. We construct a discrete Dirac operator factorizing the discrete Laplacian, using both forward and backward differences $\Delta_j^{\pm}$, $j=1,\dots, m$, acting on Clifford-valued functions $f$ as follows:
$$
\Delta^+_j[f](\cdot) = f(\cdot +\e_j) - f(\cdot), \qquad \Delta^-_j[f](\cdot) = f(\cdot) - f(\cdot-\e_j). 
$$
With respect to the $\mathbb{Z}^m$-grid, the usual definition of the discrete Laplacian in $\ux \in \mathbb{Z}^m$ is
$$
\Delta^{\ast}[f](\ux) = \sum_{j=1}^m \Delta^+_j \Delta^-_j[f] = \sum_{j=1}^m \left( f(\ux+\e_j) + f(\ux - \e_j) \right) - 2 m\, f(\ux).
$$
This operator is also known as ``Star Laplacian"; we will from now on simply write $\Delta$. An appropriate definition of a discrete Dirac operator $\p$ factorizing $\Delta$, i.e. satisfying $\p^2 = \Delta$, is obtained by combining the forward and backward basis elements with the corresponding forward and backward differences, more precisely  
$$
\p = \sum_{j=1}^m \left(\e^+_j \Delta^+_j + \e^-_j \Delta^-_j \right). 
$$
In order to receive an analogue of the classical Weyl relations $\p_{x_j} x_k - x_k \p_{x_j} = \delta_{jk}$, the co-ordinate vector variable operators $\xi_j = \e_j^+ \, X_j^- + \e_j^- \,X_j^+$ are defined by their interaction with the corresponding co-ordinate operators $\p_j = \e_j^+ \, \Delta_j^+ + \e_j^- \,\Delta_j^-$, according to the skew Weyl relations, cf. \cite{SW}
$$
\p_j \, \xi_j - \xi_j \, \p_j = 1, \ j= 1,\ldots,m,
$$
which imply that $\p_j\, \xi_j^k[1] = k \,\xi_j^{k-1}[1]$. The operators $\xi_j$ and $\p_j$ furthermore satisfy the following anti-commutator relations:
$$
\left\{ \xi_j, \xi_k \right\} = \left\{Ê\p_j, \p_k \right\} = \left\{ \p_j, \xi_k \right\} = 0, \qquad j \neq k, \; j, k = 1,\ldots,m
$$
implying that $\p_\ell \,\xi_j^k[1] = 0$, $j \neq \ell$.

\medskip The natural powers $\xi^k_j[1]$ of the operator $\xi_j$ acting on the ground state 1 are the basic discrete $k$-homogeneous polynomials of degree $k$ in the variable $x_j$, i.e. $\mathbb{E}\, \xi_j^k[1] = k\,\xi_j^k[1]$, where $\mathbb{E} = \sum_{j=1}^m \xi_j\,\p_j$ is the discrete Euler operator. They constitute a basis for all discrete polynomials.  Explicit formulas for $\xi_j^k[1]$ are given for example in \cite{SW,CKE}; furthermore $\xi_j^k[1](x_j)=0$ if $ k \geqslant 2\,|x_j|+1$.

\medskip A discrete function is discrete harmonic (resp. left discrete monogenic) in a domain $\Omega \subset \mathbb{Z}^m$ if $\Delta f(\ux) = 0$ (resp. $\p f(\ux) = 0$), for all $\ux \in \Omega$. The space of discrete harmonic (resp. monogenic) homogeneous polynomials of degree $k$ is denoted $\mathcal{H}_k$ (resp. $\mathcal{M}_k$), while the space of all discrete harmonic (resp. monogenic) homogeneous polynomials is denoted $\mathcal{H}$ (resp. $\mathcal{M}$). It is clear that 
$$
\mathcal{H} = \bigoplus_{k=0}^\infty \mathcal{H}_k, \qquad \mathcal{M} = \bigoplus_{k=0}^\infty \mathcal{M}_k.
$$
The respective dimensions over the discrete Clifford algebra are 
$$
\dim(\mathcal{H}_k) = \binom{k+m-1}{k} - \binom{k+m-3}{k}, \qquad \dim(\mathcal{M}_k) = \binom{k+m-2}{k}. 
$$

\section{Orthogonal Lie algebras}
We will start by briefly introducing the orthogonal Lie algebra $\gso(m,\mathbb{C})$; a detailed description can be found for example in \cite{FH}. The orthogonal Lie algebra $\gso(m,\mathbb{C})$ is generated in even dimension $m = 2n$ by $\binom{m}{2}$ basis elements $H_a$, $X_{a,b}$, $Y_{a,b}$ and $Z_{a,b}$ ($1 \leqslant a, b \leqslant n$) and in odd dimension $m=2n+1$ these basis elements are extended to a full basis of $\gso(m,\mathbb{C})$ by $2n$ extra elements $U_a$ and $V_a$, $1 \leqslant a \leqslant n$:
\begin{align*}
\gso(2n,\mathbb{C}) &= \textup{span}_\mathbb{C} \left\{ H_a, X_{a,b}, Y_{a,b}, Z_{a,b}, 1\leqslant a, b \leqslant n, \, a \neq b \right\}, \\ 
\gso(2n+1,\mathbb{C}) &= \textup{span}_\mathbb{C} \left\{ H_a, X_{a,b}, Y_{a,b}, Z_{a,b}, U_a, V_a, 1\leqslant a, b \leqslant n, \, a \neq b \right\}. 
\end{align*}
The Cartan subalgebra can be chosen as
$$
\frak{h}Ê= \left\{ H_a, 1 \leqslant a \leqslant n \right\},
$$
independently of the parity of the dimension, i.e. $\gso(2n,\mathbb{C})$ and $\gso(2n+1,\mathbb{C})$ are both Lie algebras of rank $n$. The roots of $\gso(m,\mathbb{C})$ (see also \cite{Knapp}) are determined by considering the adjoint representation ($1 \leqslant a, b, c, d \leqslant n$):
\begin{align*}
\left[ H_c, Y_{a,b}\right] &= \left( \delta_{ca} + \delta_{cb}\right) Y_{a,b} = \left(\left( L_a + L_b \right)(H_c)\right) Y_{a,b}, \\
\left[ H_c, X_{a,b}\right] &= \left( \delta_{ca} - \delta_{cb}\right) X_{a,b} = \left(\left( L_a - L_b \right)(H_c)\right) X_{a,b}, \\
\left[ H_c, Z_{a,b}\right] &= -\left( \delta_{ca} + \delta_{cb}\right) Z_{a,b} = \left(\left( - L_a - L_b \right)(H_c)\right) Z_{a,b}, \\
\left[ H_c, U_{a}\right] &= \delta_{ca}\, U_{a} = \left( L_a(H_c)\right) U_{a}, \\
\left[ H_c, V_{a}\right] &= -\delta_{ca}\, U_{a} = \left( -L_a(H_c)\right) U_{a}.
\end{align*}
Note in particular that the Cartan subalgebra elements $H_a$ can be found by means of the commutator of a positive root with a negative root of the same index:
$$
\left[ Y_{a,b}, Z_{a,b}\right] = -H_a - H_b, \qquad \left[ X_{a,b}, X_{b,a} \right]Ê= H_a - H_b. 
$$
We thus deduce the following roots and root vectors. Here $\left\{ L_a, 1 \leqslant a \leqslant n \right\}$ is a basis of the dual vector space $\frak{h}^\ast$ of the Cartan subalgebra $\frak{h}$, i.e. $L_a \left(H_b \right) = \delta_{a,b}$. 

{\centering 
\begin{table}[h]
\begin{center}
\begin{tabular}{ccc}
$m = 2n$ && $m = 2n+1$ \\[1ex] 
\begin{tabular}{cc}
root & root vector \\
\hline
$L_a - L_b$ & $X_{a,b}$ \\
$L_a + L_b$ & $Y_{a,b}$ \\
$-L_a - L_b$ & $Z_{a,b}$ \\
& \\
&
\end{tabular} 
& \hspace{2cm} &
\begin{tabular}{cc}
root & root vector \\
\hline
$L_a - L_b$ & $X_{a,b}$ \\
$L_a + L_b$ & $Y_{a,b}$ \\
$-L_a - L_b$ & $Z_{a,b}$ \\
$L_a$ & $U_a$ \\
$-L_a$ & $V_a$
\end{tabular}
\end{tabular}
\end{center}
\end{table}
}

By the usual convention, we choose the positive roots in even dimension to be 
$$
\left\{ L_a + L_b : 1 \leqslant a \neq b \leqslant n \right\} \cup \left\{ L_a - L_b : 1 \leqslant a < b \leqslant n \right)
$$
and negative roots
$$
\left\{ -L_a - L_b : 1 \leqslant a \neq b \leqslant n \right\} \cup \left\{ L_a - L_b : 1 \leqslant b < a \leqslant n \right).
$$
In odd dimension, one finds positive roots 
$$
\left\{ L_a + L_b : 1 \leqslant a \neq b \leqslant n \right\} \cup \left\{ L_a - L_b : 1 \leqslant a < b \leqslant n \right) \cup \left\{ L_a: 1 \leqslant a \leqslant n \right\}
$$
and negative roots
$$
\left\{ -L_a - L_b : 1 \leqslant a \neq b \leqslant n \right\} \cup \left\{ L_a - L_b : 1 \leqslant b < a \leqslant n \right) \cup \left\{ -L_a: 1 \leqslant a \leqslant n \right\}.
$$

\medskip In \cite{Rotations}, we introduced the algebra $\gso(m,\mathbb{C})$ (up to an isomorphism) in the discrete Clifford analysis context. The generators of $\gso(m,\mathbb{C})$ were not given in terms of the root vectors and Cartan subalgebra, but rather by the generators $\left\{ dR(e_{a,b}): 1 \leqslant a \neq b \leqslant m \right\}$, satisfying the defining relations of $\gso(m,\mathbb{C})$:
\begin{equation} \label{eq:commutator_rule_dR}
\left[ dR(e_{a,b}), dR(e_{c,d})Ê\right] = \delta_{a,d} \, dR(e_{b,c}) + \delta_{b,c}Ê\, dR(e_{a,d}) - \delta_{a,c} \, dR(e_{b,d}) - \delta_{b,d}Ê\, dR(e_{a,c}).
\end{equation} 
In the following sections, we will re-establish the orthogonal Lie algebra in the discrete Clifford analysis setting, but now by determining the explicit expressions of the root vectors and Cartan subalgebra.

\section{Decomposition of $\mathcal{M}_k$ in irreducible representations}
\subsection{Even dimension $m =2n$}
\begin{defi}
We define the operators $H_a$, $X_{a,b}$, $Y_{a,b}$ and $Z_{a,b}$ $\in \gso(m,\mathbb{C})$:
\begin{align*}
H_a &= i \, dR(e_{2a-1,2a}), \qquad 1 \leqslant a \leqslant n, \\
X_{a,b} &= \frac{1}{2} \left( dR(e_{2a-1,2b-1}) + i \, dR(e_{2a-1,2b}) - i \, dR(e_{2a,2b-1}) + dR(e_{2a,2b})Ê\right), \\
Y_{a,b} &= \frac{1}{2} \left( dR(e_{2a-1,2b-1}) - i \, dR(e_{2a-1,2b}) - i \, dR(e_{2a,2b-1}) - dR(e_{2a,2b})Ê\right), \\
Z_{a,b} &= \frac{1}{2} \left( dR(e_{2a-1,2b-1}) + i \, dR(e_{2a-1,2b}) + i \, dR(e_{2a,2b-1}) - dR(e_{2a,2b})Ê\right),
\qquad 1 \leqslant a, b \leqslant n.
\end{align*}
Note that, because $dR(e_{a,b}) = - dR(e_{b,a})$, we find that $Y_{b,a}Ê= -Y_{a,b}$ and $Z_{b,a} = -Z_{a,b}$. For $X_{a,b}$, we find that $X_{b,a} \neq X_{a,b}$ and that $X_{a,a} = H_a$, hence we will only consider couples $(a,b)$ with $a \neq b$. 
\end{defi}

We will now show that these operators indeed show the expected commutator relations:
\begin{lemma}
The operators $H_c$, $X_{a,b}$, $Y_{a,b}$ and $Z_{a,b}$, $1 \leqslant a, b, c, d \leqslant n$, satisfy the commutator relations given in Lemma \ref{lem:commrelso}; in particular:
\begin{align*}
\left[ H_c, Y_{a,b}\right] &= \left( \delta_{ca} + \delta_{cb}\right) Y_{a,b} = \left( L_a + L_b\right)(H_c) \, Y_{a,b}, \\
\left[ H_c, X_{a,b}\right] &= \left( \delta_{ca} - \delta_{cb}\right) X_{a,b} = \left( L_a - L_b\right)(H_c) \, X_{a,b}, \\
\left[ H_c, Z_{a,b}\right] &= -\left( \delta_{ca} + \delta_{cb}\right) Z_{a,b} = -\left( L_a + L_b\right)(H_c) \, Z_{a,b}, \\
\left[ X_{a,b}, Y_{c,d}Ê\right] &= \delta_{bc}Ê\, Y_{a,d} - \delta_{bd}Ê\, Y_{a,c}. 
\end{align*}
In particular, $X_{a,b}$, $a < b$ resp. $Y_{a,b}$ are root vectors corresponding to the positive roots $L_a - L_b$, resp. $L_a + L_b$. Furthermore, $X_{a,b}$ with $a > b$ and $Z_{a,b}$ are root vectors corresponding to the negative roots $L_a - L_b$ resp. $-L_a - L_b$. 
\end{lemma}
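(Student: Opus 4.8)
The plan is to reduce the entire statement to the single known fact, namely relation \eqref{eq:commutator_rule_dR}, by a direct but organised computation. Since the operators $H_a, X_{a,b}, Y_{a,b}, Z_{a,b}$ are by definition complex linear combinations of the $dR(e_{p,q})$ with $1\leqslant p,q\leqslant m=2n$, every bracket among them expands bilinearly into a sum of brackets $[dR(e_{p,q}), dR(e_{r,s})]$, each of which is rewritten via \eqref{eq:commutator_rule_dR} as a sum of at most four terms $dR(e_{u,v})$ with Kronecker-delta coefficients. Re-collecting these $dR(e_{u,v})$'s back into the combinations defining $H, X, Y, Z$ then yields the claimed identities. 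So the proof is a bookkeeping exercise; the work is in organising it so it is short and checkable.

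Concretely, I would first record the ``change of basis'' in the reverse direction, expressing each $dR(e_{2a-1,2b-1})$, $dR(e_{2a-1,2b})$, $dR(e_{2a,2b-1})$, $dR(e_{2a,2b})$ as a linear combination of $X_{a,b}, Y_{a,b}, Z_{a,b}, X_{b,a}$ (and, when $a=b$, of $H_a$); this is just inverting the four $2\times 2$-type systems in the Definition. Then I would prove the four displayed identities in turn. For $[H_c, Y_{a,b}]$, $[H_c, X_{a,b}]$, $[H_c, Z_{a,b}]$ one substitutes $H_c = i\, dR(e_{2c-1,2c})$ and the expansion of $Y_{a,b}$ etc., applies \eqref{eq:commutator_rule_dR}, and checks that the delta's conspire to give exactly $(\delta_{ca}\pm\delta_{cb})$ times the original operator; the three cases $c\notin\{a,b\}$, $c=a$, $c=b$ should be handled separately since the surviving terms differ. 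For $[X_{a,b}, Y_{c,d}]$ one expands both into four $dR$'s, gets sixteen brackets, applies \eqref{eq:commutator_rule_dR}, and re-assembles; here one should exploit that indices $2a-1,2a$ are always distinct from $2b-1,2b$ when $a\neq b$, so many delta's vanish identically and only the interaction through $b=c$ or $b=d$ (equivalently through the middle indices $2b-1,2b$ versus $2c-1,2c$) survives, producing $\delta_{bc}Y_{a,d}-\delta_{bd}Y_{a,c}$. The remaining brackets of Lemma~1 (those of $\gso(m,\mathbb{C})$ not displayed here, e.g. $[X,X]$, $[Y,Z]$, $[X,Z]$, $[Y,Y]$, $[Z,Z]$) are handled by the same mechanism; since they are routine I would either relegate them to a remark or simply state that they follow by the identical expansion.

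The identification of root vectors at the end is then immediate: the relations $[H_c, X_{a,b}] = (L_a-L_b)(H_c)\,X_{a,b}$ and $[H_c, Y_{a,b}] = (L_a+L_b)(H_c)\,Y_{a,b}$ and $[H_c, Z_{a,b}] = -(L_a+L_b)(H_c)\,Z_{a,b}$, together with the normalisation $L_a(H_b)=\delta_{a,b}$ fixed in the preliminaries, say precisely that $X_{a,b}$, $Y_{a,b}$, $Z_{a,b}$ are eigenvectors of $\mathrm{ad}(\frak h)$ with eigenfunctionals $L_a-L_b$, $L_a+L_b$, $-L_a-L_b$ respectively; comparing with the sign convention for positive/negative roots recorded in Section~3 (positive: $L_a+L_b$ for $a\neq b$, and $L_a-L_b$ for $a<b$) gives the stated assignment. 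I also need to note $H_a\in\frak h$ spans a Cartan subalgebra, which is clear since the $H_a$ commute pairwise (the case $a=b=c$ of $[H_c, X_{a,b}]$, or directly from $[dR(e_{2a-1,2a}), dR(e_{2b-1,2b})]=0$ for $a\neq b$ out of \eqref{eq:commutator_rule_dR}).

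The main obstacle is purely combinatorial rather than conceptual: keeping the sixteen-term expansion of $[X_{a,b}, Y_{c,d}]$ (and the analogous ones) under control, in particular getting all signs right given that $dR(e_{p,q})=-dR(e_{q,p})$ and that \eqref{eq:commutator_rule_dR} itself is antisymmetric in a specific way. A clean way to avoid sign errors is to introduce, once and for all, the auxiliary ``complex'' index vectors $\varepsilon_a^{+}$ and $\varepsilon_a^{-}$ via $dR(e_{2a-1,2b-1})\pm\dots$, and observe that $X_{a,b}, Y_{a,b}, Z_{a,b}$ correspond to the four products $\varepsilon_a^{\pm}\wedge\varepsilon_b^{\pm}$; the $\gso$ relations then follow from a single quadratic identity. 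If that reformulation is too heavy, the fallback is to simply tabulate the nonzero terms in each of the relevant brackets, which is tedious but entirely mechanical.
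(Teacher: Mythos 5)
Your proposal is correct and is in substance the same argument the paper uses: the paper's proof is a one-line reduction observing that the $dR(e_{a,b})$ satisfy exactly the commutator relations \eqref{eq:commutator_rule_dR} shared with the operators $\Omega_{a,b}$ of the harmonic case, and then defers the bilinear expansion and re-collection to the companion paper \cite{paperIsaac}. You simply carry out that bookkeeping explicitly instead of citing it, and your outline (including the reverse change of basis, the case split on $c\in\{a,b\}$, and the parity observation that kills most deltas in $[X_{a,b},Y_{c,d}]$) is sound.
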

\begin{proof}
Since the commutator relations between the operators $dR(e_{a,b})$ are the same as those between the operators $\Omega_{a,b}$ of the harmonics, the proof is completely similar as the proof in \cite{paperIsaac}. 
\end{proof}

\medskip We already established in \cite{rotations} that although $\mathcal{M}_{k}$ is a representation of $\gso(2n,\mathbb{C})$, by means of the operators $dR(e_{a,b})$, acting on $\mathcal{M}_k$, this representation is not irreducible. The decomposition is done by splitting $1$ into a sum of idempotents. We will now introduce the appropriate idempotents for this situation. For a function $P_k \, L$ to be an eigenfunction of the maximal abelian subgroup $\frak{h}$, it must certainly hold that $L \, e_{2a-1}^\perp \, e_{2a}^\perp$ is again equal to $L$ up to a (complex) constant. Consider, for $a = 1, \ldots, n$, the Clifford elements
\begin{align*}
L^\pm_{2a-1} &= \left( \e_{2a-1}^+ \e_{2a-1}^- \pm i\, \e_{2a-1}^+Ê\right), \qquad L^\pm_{2a}Ê= \left( \e_{2a}^+ \e_{2a}^- \pm \e_{2a}^+Ê\right), \\
M^\pm_{2a-1} &= \left( \e_{2a-1}^- \e_{2a-1}^+ \pm i\, \e_{2a-1}^-Ê\right), \qquad M^\pm_{2a}Ê= \left( \e_{2a}^- \e_{2a}^+ \pm \e_{2a}^-Ê\right),
\end{align*}

\medskip For the rest of this article, we will need the following notations. For a factor $F_a \in \left\{ÊL_a^\pm, M_a^\pm \right\}$, $a = 1,\ldots,m$, denote 
$$
|F_a| = \begin{cases}
0, & F_a = L_a^+ \text{ or } M_a^-, \\
1, & F_a = L_a^- \text{ or }ÊM_a^+. 
\end{cases}Ê
\text{ and }Ê
\| F_a \| = \begin{cases}
0, & F_a = L_a^\pm, \\
1, & F_a = M_a^\pm. 
\end{cases}Ê
$$ 
Furthermore, denote by $\widetilde{F}_a$ the idempotent
$$
\widetilde{F}_s = \begin{cases}
L_a^\mp, & \text{Êif }ÊF_a = L_a^\pm, \\
M_a^\mp, & \text{Êif }ÊF_a = M_a^\pm.
\end{cases}
$$
Then $|\widetilde{F}_s|Ê= 1 - |F_s|$ and $\|Ê\widetilde{F}_s \|Ê= \| F_s \|$. 

\begin{lemma} \label{lem:rightmultiplicationL}
The multiplication from the right on the idempotent $F_a \in \left\{ L_a^\pm, M_a^\pm \right\}$ by $e_a$ is given by 
\begin{align*}
F_{2a-1}\,e^\perp_{2a-1} &= (-1)^{|F_{2a}| + 1} \,i\, F_{2a-1}, \\
F_{2a}\,e^\perp_{2a} &= (-1)^{|F_{2a}|+1} \, \widetilde{F}_{2a}. 
\end{align*}
As a result, for $1 \leqslant a \leqslant n$, we have that
\begin{align*}
F_{2a-1} \, F_{2a} \, e_{2a-1}^\perp \, e_{2a}^\perp &= (-1)^{|F_{2a-1}| + |F_{2a}| + 1}Ê\, i \, F_{2a-1}\, F_{2a}.
\end{align*}

\medskip We also find that for $1 \leqslant a < b \leqslant n$ and a general idempotent $F = \prod_{s= 1}^m F_s$, with $F_s \in \left\{ L_s^\pm, M_s^\pm \right\}$, we get
\begin{align*}
V_{2a-1,2b-1} \, F \, e_{2a-1}^\perp \, e_{2b-1}^\perp &= (-1)^{|F_{2a-1}| + |F_{2b-1}| + \|F_{2a-1} \|Ê+ \|F_{2b-1} \| + 1} \, F^{2a,2b-1}, \\
V_{2a-1,2b} \, F \, e_{2a-1}^\perp \, e_{2b}^\perp &= (-1)^{|F_{2a-1}| + |F_{2b}| + \|F_{2a-1} \|Ê+ \|F_{2b} \|} \, i\, F^{2a,2b-1}, \\
V_{2a,2b-1} \, F \, e_{2a}^\perp \, e_{2b-1}^\perp &= (-1)^{|F_{2a}| + |F_{2b-1}| + \|F_{2a} \|Ê+ \|F_{2b-1} \|} \, i \, F^{2a,2b-1}, \\
V_{2a,2b} \, F \, e_{2a}^\perp \, e_{2b}^\perp &= (-1)^{|F_{2a}| + |F_{2b}| + \|F_{2a} \|Ê+ \|F_{2b} \|} \, F^{2a,2b-1}.
\end{align*}
where we denote, for $1 \leqslant s_1 < s_2 \leqslant m$:
$$
F^{s_1, s_2} = F_1 \, F_2 \ldots \, F_{s_1 - 1} \, \widetilde{F}_{s_1} \, \widetilde{F}_{s_1+1} \ldots \, \widetilde{F}_{s_2-1}\, \widetilde{F}_{s_2} \, F_{s_2 + 1} \, F_{s_2 + 2} \ldots F_{m-1} \, F_m.
$$
\end{lemma}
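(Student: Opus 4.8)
The whole lemma is a direct computation in $\mathbb{C}_{2m,0}$; the plan is to extract two elementary building blocks and then bookkeep signs. Recall $e_j = \e_j^+ + \e_j^-$ and $e_j^\perp = \e_j^+ - \e_j^-$, so that $e_j^2 = 1$, $(e_j^\perp)^2 = -1$, $e_j\,e_j^\perp = \e_j^-\e_j^+ - \e_j^+\e_j^-$, and the elements $\e_j^+\e_j^-$, $\e_j^-\e_j^+$, $e_j\,e_j^\perp$ are all even and ``supported on the index $j$''. Building block (i), the \emph{flip rule}: for a vector $v\in\{e_t,e_t^\perp\}$ with $t\neq s$ one has $v\,F_s = \widetilde{F}_s\,v$ and $F_s\,v = v\,\widetilde{F}_s$, since $v$ commutes with the even part of $F_s$ and anticommutes with its odd part, and (even part) $-$ (odd part) of $F_s$ equals $\widetilde{F}_s$ (it sends $L_s^\pm\mapsto L_s^\mp$, $M_s^\pm\mapsto M_s^\mp$). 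Building block (ii), the \emph{single-index identities}, obtained by substituting the definitions of $L_s^\pm,M_s^\pm,e_s^\perp$ and reducing with $\e_s^+\e_s^-\e_s^+ = \e_s^+$, $\e_s^-\e_s^+\e_s^- = \e_s^-$, $(\e_s^\pm)^2=0$:
\begin{align*}
F_{2a-1}\,e_{2a-1}^\perp &= (-1)^{|F_{2a-1}|+1}\,i\,F_{2a-1}, & F_{2a}\,e_{2a}^\perp &= (-1)^{|F_{2a}|+1}\,\widetilde{F}_{2a}, \\
e_s\,e_s^\perp\,F_s &= (-1)^{\|F_s\|+1}\,F_s .
\end{align*}
The first two are the first two displayed equations of the lemma.

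The ``As a result'' identity for the pair $(2a-1,2a)$ follows at once: by (i), $F_{2a}\,e_{2a-1}^\perp = e_{2a-1}^\perp\,\widetilde{F}_{2a}$, so $F_{2a-1}F_{2a}\,e_{2a-1}^\perp e_{2a}^\perp = (F_{2a-1}e_{2a-1}^\perp)(\widetilde{F}_{2a}\,e_{2a}^\perp)$, and applying (ii) twice (the second identity to $\widetilde{F}_{2a}$, with $|\widetilde{F}_{2a}| = 1-|F_{2a}|$ and $\widetilde{\widetilde{F}}_{2a}=F_{2a}$) collapses this to $(-1)^{|F_{2a-1}|+|F_{2a}|+1}\,i\,F_{2a-1}F_{2a}$.

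For the four $V$-formulas I treat $V_{2a-1,2b-1}$; the remaining three are the same modulo parities. Factor $V_{2a-1,2b-1} = e_{2a-1}e_{2b-1}e_{2a-1}^\perp e_{2b-1}^\perp = -\,e_{2a-1}e_{2a-1}^\perp\,e_{2b-1}e_{2b-1}^\perp$ (one transposition) and write $e_{2a-1}^\perp e_{2b-1}^\perp = -\,e_{2b-1}^\perp e_{2a-1}^\perp$. Now push $e_{2b-1}^\perp$ leftward through $F = F_1\cdots F_m$: by (i) it turns $F_m,\dots,F_{2b}$ into their tildes and is then absorbed at $F_{2b-1}$ by a single-index identity; next push $e_{2a-1}^\perp$ leftward: it un-flips $\widetilde{F}_m,\dots,\widetilde{F}_{2b}$ back, flips $F_{2b-1},\dots,F_{2a}$ into their tildes, and is absorbed at $F_{2a-1}$. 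What survives is $F_1\cdots F_{2a-1}\,\widetilde{F}_{2a}\cdots\widetilde{F}_{2b-1}\,F_{2b}\cdots F_m = F^{2a,2b-1}$. Finally the even blocks $e_{2a-1}e_{2a-1}^\perp$, $e_{2b-1}e_{2b-1}^\perp$ of $V$ commute (being even and supported on single indices) to the slots $2a-1$ and $2b-1$ and act there via the third identity of (ii). Collecting the two transposition signs, the two boundary factors $(-1)^{|F_{2a-1}|+1}i$ and $(-1)^{|F_{2b-1}|+1}i$ (the two $i$'s producing $i^2=-1$), and the factors $(-1)^{\|F_{2a-1}\|+1}$, $(-1)^{\|F_{2b-1}\|+1}$ yields precisely $(-1)^{|F_{2a-1}|+|F_{2b-1}|+\|F_{2a-1}\|+\|F_{2b-1}\|+1}\,F^{2a,2b-1}$. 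For $V_{2a-1,2b}$, $V_{2a,2b-1}$, $V_{2a,2b}$ the only change is that an absorption at an even index produces $\widetilde{F}$ rather than $F$ (which is then un-flipped as needed so that the surviving word is again $F^{2a,2b-1}$) and carries a factor $1$ instead of $i$ — which is exactly why a single factor $i$ survives in two of the four cases and none in the others.

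I expect the real obstacle to be the sign bookkeeping of this last paragraph: four nearly identical cases, each a short chain of transpositions and two boundary absorptions, in which the flips at the indices $2a-1$ and $2b-1$ that the trailing $e^\perp$'s would create must be exactly cancelled by the $e_te_t^\perp$ prefactors of $V$, and in which the $\|\cdot\|$-dependence enters only through those prefactors. The tidiest way to carry it out is to first prove, once and for all, for $p<q$ an explicit formula for the sandwiches $e_p^\perp e_q^\perp\,F\,e_p^\perp e_q^\perp$ and $e_p e_p^\perp\,F$, and then obtain the four statements by substituting the appropriate factorization of $V$ together with the factors of $i$ forced by the parities of $2a-1,2a,2b-1,2b$; this separates the exponent combinatorics from the Clifford-algebra manipulations.
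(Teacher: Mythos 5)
Your proposal is correct and follows essentially the same route as the paper's own proof: explicit single-index computations of $F_s\,e_s^\perp$ and of $e_s\,e_s^\perp$ acting on $F_s$ (the paper phrases the latter as $e_s^\perp e_s\,L_s^\pm = L_s^\pm$, $e_s^\perp e_s\,M_s^\pm = -M_s^\pm$, giving $V_{a,b}\,F = (-1)^{1+\|F_a\|+\|F_b\|}F$), combined with the flip rule for moving the trailing $e^\perp$'s leftward through the product and absorbing them at the slots $2a-1,2a,2b-1,2b$; your sign bookkeeping in all four cases reproduces the stated exponents. Note that your version of the first displayed identity, $F_{2a-1}\,e_{2a-1}^\perp = (-1)^{|F_{2a-1}|+1}\,i\,F_{2a-1}$, silently corrects a typo in the lemma's statement (which has $|F_{2a}|$ in the exponent), in agreement with the paper's own proof.
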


\begin{proof}
Note that 
{\small \begin{align*}
L^\pm_{2a-1} \,e_{2a-1}^\perp &= \left( \e_{2a-1}^+ \mp i\, \e_{2a-1}^+Ê\e_{2a-1}^- \right) = \mp \, i \, L_{2a-1}^\pm, &
L^\pm_{2a}Ê\,e_{2a}^\perp &= \left( \e_{2a}^+ \mp \e_{2a}^+Ê \e_{2a}^-\right) = \mp L_{2a}^\mp, \\
M^\pm_{2a-1} \,e_{2a-1}^\perp &= \left( -\e_{2a-1}^- \pm i\, \e_{2a-1}^- \e_{2a-1}^+Ê\right) = \pm \, i \, M_{2a-1}^\pm , &
M^\pm_{2a}Ê\,e_{2a}^\perp &= \left( - \e_{2a}^- \pm \e_{2a}^-Ê\e_{2a}^+ \right) = \pm \, M_{2a}^\mp.
\end{align*}}
We may indeed summarize this as
$$
F_{2a-1}\,e^\perp_{2a-1} = (-1)^{|F_{2a-1}| + 1} \,i\, F_{2a-1}, \qquad
F_{2a}\,e^\perp_{2a} = (-1)^{|F_{2a}|+1} \, \widetilde{F}_{2a}.
$$
From this, it follows that
$$
\widetilde{F}_{2a-1}\, e_{2a-1}^\perp = (-1)^{|F_{2a-1}|} \, i\, \widetilde{F}_{2a-1}, \qquad
\widetilde{F}_{2a}\,e^\perp_{2a} = (-1)^{|F_{2a}|} \, F_{2a}. 
$$
Hence, for $F_a \in \left\{ L_a^\pm, M_a^\pm \right\}$, we have
\begin{align*}
F_{2a-1} \, F_{2a} \, e_{2a-1}^\perp \, e_{2a}^\perp &= F_{2a-1}Ê\, e_{2a-1}\, \widetilde{F}_{2a}\, e_{2a}^\perp = (-1)^{|F_{2a}| + |F_{2a}| + 1}Ê\, i \, F_{2a-1} \,F_{2a}.
\end{align*}
Also important to note is that $e^\perp_a \, e_a \, L_a^\pm = L_a^\pm$ and $e^\perp_a\, e_a \, M^\pm_a = -M_a^\pm$ so for the idempotent $F = \prod_{s = 1}^m F_s $, we find that 
$$
V_{a,b} \, F = -e_a^\perp \, e_a \, e_b^\perp\, e_b\, F = (-1)^{1 + \| F_a \|Ê+ \| F_b \| } \, F.
$$
We thus get, for $F = \prod_{s=1}^m F_s$, that 
\begin{align*}
& V_{2a-1,2b-1} \, F \, e_{2a-1}^\perp \, e_{2b-1}^\perp = (-1)^{1 + \|F_{2a-1} \|Ê+ \|F_{2b-1} \|}Ê\, F_1 \, F_2 \ldots F_m \, e_{2a-1}^\perp \, e_{2b-1}^\perp \\
&= (-1)^{1 + \|F_{2a-1} \|Ê+ \|F_{2b-1} \|}Ê\, F_1 \ldots F_{2a-2}Ê\, F_{2a-1}Ê\, e_{2a-1}^\perp \, \widetilde{F}_{2a} \, \ldots \widetilde{F}_{2b-1}Ê\, e_{2b-1}^\perp \, F_{2b}Ê\, F_{2b+1} \ldots F_m \\
&= (-1)^{|F_{2a-1}| + |F_{2b-1}| + \|F_{2a-1} \|Ê+ \|F_{2b-1} \|}Ê\, i^2 \, F_1 \, F_2 \ldots F_{2a-2}Ê\, F_{2a-1} \, \widetilde{F}_{2a} \, \ldots \widetilde{F}_{2b-1}Ê\, F_{2b}Ê\, F_{2b+1} \ldots F_m \\
&= (-1)^{|F_{2a-1}| + |F_{2b-1}| + \|F_{2a-1} \|Ê+ \|F_{2b-1} \| + 1} \, F^{2a,2b-1}.
\end{align*}
Analogously, we find that 
\begin{align*}
& V_{2a-1,2b} \, F \, e_{2a-1}^\perp \, e_{2b}^\perp = (-1)^{1 + \|F_{2a-1} \|Ê+ \|F_{2b} \|}Ê\, F_1 \, F_2 \ldots F_m \, e_{2a-1}^\perp \, e_{2b}^\perp \\
&= (-1)^{1 + \|F_{2a-1} \|Ê+ \|F_{2b} \|}Ê\, F_1 \ldots F_{2a-2}Ê\, F_{2a-1}Ê\, e_{2a-1}^\perp \, \widetilde{F}_{2a} \, \ldots \widetilde{F}_{2b}Ê\, e_{2b}^\perp \, F_{2b+1}Ê\, F_{2b+2} \ldots F_m \\
&= (-1)^{|F_{2a-1}| + |F_{2b}| + \|F_{2a-1} \|Ê+ \|F_{2b} \|}Ê\, i \, F_1 \, F_2 \ldots F_{2a-2}Ê\, F_{2a-1} \, \widetilde{F}_{2a} \, \ldots \widetilde{F}_{2b-1}Ê\, F_{2b}Ê\, F_{2b+1} \ldots F_m \\
&= (-1)^{|F_{2a-1}| + |F_{2b}| + \|F_{2a-1} \|Ê+ \|F_{2b} \|} \, i\, F^{2a,2b-1}.
\end{align*}
Also 
\begin{align*}
& V_{2a,2b-1} \, F \, e_{2a}^\perp \, e_{2b-1}^\perp = (-1)^{1 + \|F_{2a} \|Ê+ \|F_{2b-1} \|}Ê\, F_1 \, F_2 \ldots F_m \, e_{2a}^\perp \, e_{2b-1}^\perp \\
&= (-1)^{1 + \|F_{2a} \|Ê+ \|F_{2b-1} \|}Ê\, F_1 \ldots F_{2a-1}Ê\, F_{2a}Ê\, e_{2a}^\perp \, \widetilde{F}_{2a+1} \, \ldots \widetilde{F}_{2b-1}Ê\, e_{2b-1}^\perp \, F_{2b}Ê\, F_{2b+1} \ldots F_m \\
&= (-1)^{|F_{2a}| + |F_{2b-1}| + \|F_{2a} \|Ê+ \|F_{2b-1} \|}Ê\, i \, F_1 \, F_2 \ldots F_{2a-1}Ê\, \widetilde{F}_{2a} \, \widetilde{F}_{2a+1} \, \ldots \widetilde{F}_{2b-1}Ê\, F_{2b}Ê\, F_{2b+1} \ldots F_m \\
&= (-1)^{|F_{2a}| + |F_{2b-1}| + \|F_{2a} \|Ê+ \|F_{2b-1} \|} \, i \, F^{2a,2b-1}.
\end{align*}
Finally
\begin{align*}
& V_{2a,2b} \, F \, e_{2a}^\perp \, e_{2b}^\perp = (-1)^{1 + \|F_{2a} \|Ê+ \|F_{2b} \|}Ê\, F_1 \, F_2 \ldots F_m \, e_{2a}^\perp \, e_{2b}^\perp \\
&= (-1)^{1 + \|F_{2a} \|Ê+ \|F_{2b} \|}Ê\, F_1 \ldots F_{2a-1}Ê\, F_{2a}Ê\, e_{2a}^\perp \, \widetilde{F}_{2a+1} \, \ldots \widetilde{F}_{2b}Ê\, e_{2b}^\perp \, F_{2b+1} \ldots F_m \\
&= (-1)^{|F_{2a}| + |F_{2b}| + \|F_{2a} \|Ê+ \|F_{2b} \|}Ê\, F_1 \, F_2 \ldots F_{2a-1}Ê\, \widetilde{F}_{2a} \, \widetilde{F}_{2a+1} \, \ldots \widetilde{F}_{2b-1}Ê\, F_{2b} \, F_{2b+1} \ldots F_m \\
&= (-1)^{|F_{2a}| + |F_{2b}| + \|F_{2a} \|Ê+ \|F_{2b} \|} \, F^{2a,2b-1}.
\end{align*}
\end{proof}

Consider the basic monogenic functions 
$$
g_{2k} = \left( \left( \xi_2 - \xi_1 \right) \left( \xi_2 + \xi_1 \right) \right)^k, \qquad g_{2k+1} = \left( \xi_2 - \xi_1 \right)  \left( \left( \xi_2 + \xi_1 \right) \left( \xi_2 - \xi_1 \right) \right)^k.
$$
From now on we denote $(k)'_+ = \left( k+ \frac{1}{2},\frac{1}{2},\ldots,\frac{1}{2}\right)$ and $(k)'_- = \left( k+ \frac{1}{2},\frac{1}{2},\ldots,-\frac{1}{2}\right)$. We will show under which conditions on the idempotent $F =Ê\prod_{s=1}^n F_s$, the space $\textup{span}_{\mathbb{C}} \left\{ g_k \, F \right\}$ is a weight space of $\frak{h}$ with weight $(k)'_+$ resp. $(k)'_-$.

\begin{lemma}\label{lem:WeightVector}
The polynomial $g_k \, F \in \mathcal{M}_k$, $F = \prod_{s=1}^m F_s$ with $F_s \in \left\{ L_s^\pm, M_s^\pm \right\}$, is a weight vector of $\gso(m,\mathbb{C})$ with 
\begin{itemize}
\item weight $(k)'_+$ when $k+ |F_1| + |F_2| + \|F_1 \|Ê+ \|F_2 \|$ is even and $\|ÊF_{2a-1}\|Ê+ \|F_{2a}\| + |F_{2a-1}|Ê+ |F_{2a}|$ is even for $2 \leqslant a \leqslant n$.

\item weight $(k)'_-$ when $k+ |F_1| + |F_2| + \|F_1 \|Ê+ \|F_2 \|$ is even, $\|ÊF_{2a-1}\|Ê+ \|F_{2a}\| + |F_{2a-1}|Ê+ |F_{2a}|$ is even for $2 \leqslant a \leqslant n-1$ and $\|ÊF_{2n-1}\|Ê+ \|F_{2n}\| + |F_{2n-1}|Ê+ |F_{2n}|$ is odd.\end{itemize}
\end{lemma}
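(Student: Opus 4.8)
The strategy is to verify directly that $g_k\,F$ is a simultaneous eigenvector of the Cartan elements $H_a = i\,dR(e_{2a-1,2a})$, $1\le a\le n$, and to read off the eigenvalues. Since $dR(e_{2a-1,2a})$ acts on a discrete function $f$ as $V_{2a-1,2a}\bigl(L_{2a-1,2a}-\tfrac12\bigr)f\,e_{2a-1}^\perp\,e_{2b}^\perp$, I split the computation into the ``angular'' part (the scalar differential operator $L_{2a-1,2a}-\tfrac12$ acting on $g_k$) and the ``Clifford'' part (right multiplication of $F$ by $V_{2a-1,2a}\,e_{2a-1}^\perp\,e_{2a}^\perp$), which is exactly what Lemma~\ref{lem:rightmultiplicationL} computes. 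The key point is that the two parts decouple: $L_{2a-1,2a}$ only involves the variables $\xi_{2a-1},\xi_{2a}$ and the basis elements $\e_{2a-1}^\pm,\e_{2a}^\pm$, so it commutes past the factors $F_s$ with $s\notin\{2a-1,2a\}$.

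First I would treat the case $a=1$. The polynomial $g_k$ is built out of $\xi_1$ and $\xi_2$ only, so I compute $\bigl(L_{1,2}-\tfrac12\bigr)g_k$. One checks (using the skew Weyl relations and the anticommutators $\{\xi_j,\xi_\ell\}=0$, $j\ne\ell$) that $L_{1,2}\,g_k = \bigl(\xi_1\p_2+\xi_2\p_1\bigr)g_k$ acts on $g_k$ essentially as multiplication by an integer depending on $k$; combined with the $-\tfrac12$ this produces an eigenvalue of the form $\pm(k+\tfrac12)$. Then right-multiplication of $F_1 F_2$ by $V_{1,2}\,e_1^\perp\,e_2^\perp$ is governed by the $a=b$ specialisation implicit in Lemma~\ref{lem:rightmultiplicationL}: the formula $F_{2a-1}F_{2a}\,e_{2a-1}^\perp\,e_{2a}^\perp = (-1)^{|F_{2a-1}|+|F_{2a}|+1}\,i\,F_{2a-1}F_{2a}$ together with $V_{1,2}F = (-1)^{1+\|F_1\|+\|F_2\|}F$ shows that $F$ is reproduced up to the sign $(-1)^{|F_1|+|F_2|+\|F_1\|+\|F_2\|}$. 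Multiplying the angular eigenvalue, the Clifford sign, and the extra $i$ from $H_a=i\,dR(\cdot)$, the parity condition ``$k+|F_1|+|F_2|+\|F_1\|+\|F_2\|$ even'' is precisely what forces the $H_1$-eigenvalue to equal $+(k+\tfrac12)$ rather than $-(k+\tfrac12)$, i.e. $L_1$ evaluated on the weight; this gives the first entry $k+\tfrac12$.

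Next, for $2\le a\le n$, the polynomial $g_k$ does not involve $\xi_{2a-1},\xi_{2a}$, so $L_{2a-1,2a}\,g_k=0$ and $dR(e_{2a-1,2a})$ acts on $g_k\,F$ purely by the $-\tfrac12$ term and by right multiplication on $F_{2a-1}F_{2a}$. The same computation as above, but now with the scalar part contributing only $-\tfrac12$, yields $H_a$-eigenvalue $\pm\tfrac12$, with sign determined by the parity of $\|F_{2a-1}\|+\|F_{2a}\|+|F_{2a-1}|+|F_{2a}|$: even gives $+\tfrac12$, odd gives $-\tfrac12$. Requiring all of these to be even for $2\le a\le n$ produces the weight $(k)'_+$; allowing the last one ($a=n$) to be odd produces $(k)'_-$. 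Finally I would note that $g_k\,F\in\mathcal{M}_k$ is already known (the $g_k$ are the basic monogenics and right multiplication by a constant Clifford element preserves monogenicity and homogeneity), so it only remains to observe that an element of $\mathcal{M}_k$ which is a simultaneous $\frak h$-eigenvector is by definition a weight vector.

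The main obstacle I anticipate is the explicit evaluation of $L_{1,2}\,g_k$: one must carefully track how $\xi_1\p_2+\xi_2\p_1$ acts on the alternating product $\bigl((\xi_2-\xi_1)(\xi_2+\xi_1)\bigr)^k$ (and on its odd-degree analogue), using that $\p_1,\p_2$ lower degree in the respective variables by one and anticommute with the ``wrong'' $\xi$. A clean way is to pass to the rotated generators $\xi_2\pm\xi_1$ and the dual lowering operators, under which $L_{1,2}$ becomes diagonal; then the eigenvalue is immediate and the only real work is the bookkeeping of the resulting integer modulo $2$, which is what feeds into the stated parity conditions. Everything else is a direct substitution into Lemma~\ref{lem:rightmultiplicationL}.
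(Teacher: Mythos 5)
Your proposal follows essentially the same route as the paper: act with each Cartan element $H_a = i\,dR(e_{2a-1,2a})$ on $g_k\,F$, compute the scalar part on $g_k$ (which gives $-k-\tfrac12$ for $a=1$ and just $-\tfrac12$ for $a>1$, since $L_{2a-1,2a}\,g_k=0$ there) and the Clifford part on $F$ via Lemma~\ref{lem:rightmultiplicationL}, and read the weight off the resulting signs. The one bookkeeping point to fix is that $L_{1,2}\,g_k=-k\,g_k$ exactly, with no sign ambiguity; the $(-1)^k$ that puts $k$ into the parity condition arises instead from commuting the four-vector $V_{1,2}$ past $g_k$, because $V_{1,2}$ anticommutes with each factor $\xi_2\pm\xi_1$.
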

\begin{proof}
We consider the action of the Cartan subalgebra-elements $H_s$, $1 \leqslant s \leqslant n$, on the $g_k\,F$. Since $g_k$ only contains $\xi_1$ and $\xi_2$, we will first consider $H_1$:
\begin{align*}
H_1 \left( g_k \, F \right) &= i\, V_{12} \left( L_{12} - \frac{1}{2} \right) g_kÊ\, F \, e_1^\perp \, e_2^\perp.
\end{align*}
We will also denote
$$
f_{2k} = \left( \left( \xi_2 + \xi_1 \right) \left( \xi_2 - \xi_1 \right) \right)^k, \qquad
f_{2k+1} = \left( \xi_2 + \xi_1 \right)  \left( \left( \xi_2 - \xi_1 \right) \left( \xi_2 + \xi_1 \right) \right)^k.
$$
In \cite{} it was established that $\p_j \,g_k = (-1)^j \, k \, f_{k-1}$:  
$$
L_{12} \, g_k = \left( \xi_1 \, \p_2 + \xi_2 \, \p_1 \right) g_k = k \left( \xi_1 - \xi_2 \right) f_{k-1} = -k \left( \xi_2 - \xi_1 \right) f_{k-1} = -k \, g_k. 
$$
We thus get that 
\begin{equation*}
H_1 \left( g_k \, F \right) = i \left( -k - \frac{1}{2} \right) V_{12} \, g_kÊ\, F \, e_1^\perp \, e_2^\perp.
\end{equation*}
Now we will show that $V_{12} \, g_k = (-1)^k \, g_kÊ\, V_{12}$: consider again $V_{12}Ê= - e_1^\perp e_1 e_2^\perp e_2$, then  
\begin{align*}
e_1^\perp e_1 \,\xi_1 &= \left( \e_1^+ \e_1^- - \e_1^- \e_1^+ \right)  \left( X_1^+ \, \e_1^- + X_1^- \, \e_1^+ \right) = \left( - X_1^+ \, \e_1^- + X_1^- \, \e_1^+ \right) \\
&= \left( X_1^+ \, \e_1^- + X_1^- \, \e_1^+ \right) \left( - \e_1^+ \e_1^- + \e_1^- \e_1^+ \right) = - \xi_1 \, e_1^\perp e_1, \\
e_1^\perp e_1 \, \xi_2 &= \xi_2 \,  e_1^\perp e_1.
\end{align*}
Hence
\begin{align*}
V_{12} \left( \xi_2 \pm \xi_1 \right) &= - e_1^\perp e_1 \, e_2^\perp e_2 \left( \xi_2 \pm \xi_1 \right) = \left( \xi_2 \pm \xi_1 \right) e_1^\perp e_1 \, e_2^\perp e_2 = - \left( \xi_2 \pm \xi_1 \right) V_{12}, \\
V_{12} \left( \left( \xi_2 - \xi_1 \right) \left( \xi_2 + \xi_1 \right) \right) &= \left( \left( \xi_2 - \xi_1 \right) \left( \xi_2 + \xi_1 \right) \right) V_{12}
\end{align*}
and so $V_{12}Ê\, g_k = (-1)^k \, g_k \, V_{12}$. Applying this, we find that 
\begin{align*}
H_1 \left( g_k \, F \right) &= (-1)^{k+1} \, i \left( k+ \frac{1}{2} \right)  \, g_k \, V_{12}Ê\, F \, e_1^\perp\,e_2^\perp \\
&= (-1)^{k+ |F_1| + |F_2| + \|F_1 \|Ê+ \|F_2 \|} \left( k+ \frac{1}{2} \right)  \, g_k \, F.
\end{align*}
To be a weight vector with weight $k + \frac{1}{2}$, it must hold that 
$$
k+ |F_1| + |F_2| + \|F_1 \|Ê+ \|F_2 \| \text{ is even}.
$$
We thus find $8$ possible combinations for $F_1\,F_2$:
\begin{itemize}
\item $k$ even: 
$$
F_1 \, F_2 \in \left\{ L_1^+\, L_2^+,\  L_1^- \, L_2^-, \ L_1^+\,M_2^+, \ L_1^- \,M_2^-, \ M_1^+\,L_2^+, \ M_1^+ \,  M_2^+, \ M_1^- \, L_2^-, \ M_1^- \, M_2^- \right\}.
$$

\item $k$ odd:
$$
F_1 \, F_2 \in \left\{ L_1^+\, L_2^-,\  L_1^- \, L_2^+, \ L_1^+\,M_2^-, \ L_1^- \,M_2^+, \ M_1^+\,L_2^-, \ M_1^+ \,  M_2^-, \ M_1^- \, L_2^+, \ M_1^- \, M_2^+ \right\}.
$$
\end{itemize}

Next, we consider $H_a$, $1 < a \leqslant n$. Since the generator $g_k$ only contains $\xi_1$ and $\xi_2$, it vanishes under the action of $L_{2a-1,2a}$. Note that $V_{2a-1,2a} \, g_k = g_k \, V_{2a-1,2a}$ since $g_k$ contains only $\e_1^\pm$ and $\e_2^\pm$. Thus
\begin{align*}
H_a \left( g_k \, F \right) &= - \frac{i}{2}\, V_{2a-1,2a} \, g_kÊ\, F \, e_{2a-1}^\perp \,e_{2a}^\perp = - \frac{i}{2}\,  g_kÊ\, V_{2a-1,2a} \,F \, e_{2a-1}^\perp \,e_{2a}^\perp \\
&= (-1)^{\|ÊF_{2a-1}\|Ê+ \|F_{2a}\| + |F_{2a-1}|Ê+ |F_{2a}| + 1}\,i \, \frac{i}{2}\,  g_kÊ\, F \\Ê
&= (-1)^{\|ÊF_{2a-1}\|Ê+ \|F_{2a}\| + |F_{2a-1}|Ê+ |F_{2a}|}\, \frac{1}{2}\,  g_kÊ\, F.
\end{align*}
This equals $+\frac{1}{2} \, g_k \, F$ when $\|ÊF_{2a-1}\|Ê+ \|F_{2a}\| + |F_{2a-1}|Ê+ |F_{2a}|$ is even and $-\frac{1}{2}\, g_k\,F$ when $\|ÊF_{2a-1}\|Ê+ \|F_{2a}\| + |F_{2a-1}|Ê+ |F_{2a}|$ is odd. We may thus conclude that the statement holds. 

We find that $\|ÊF_{2a-1}\|Ê+ \|F_{2a}\| + |F_{2a-1}|Ê+ |F_{2a}|$ is even for $F_{2a-1}Ê\, F_{2a}$ in 
$$
\left\{ L_{2a-1}^+\, L_{2a}^+, L_{2a-1}^- \, L_{2a}^-, L_{2a-1}^+\,M_{2a}^+, L_{2a-1}^- \,M_{2a}^-, M_{2a-1}^+\,L_{2a}^+, M_{2a-1}^+ \, M_{2a}^+, M_{2a-1}^- \, L_{2a}^-, M_{2a-1}^- \, M_{2a}^- \right\}
$$
and odd for $F_{2a-1}Ê\, F_{2a} $ in 
$$
\left\{ L_{2a-1}^+\, L_{2a}^-, L_{2a-1}^- \, L_{2a}^+, L_{2a-1}^+ \, M_{2a}^-, L_{2a-1}^- \,M_{2a}^+, M_{2a-1}^+\,L_{2a}^-, M_{2a-1}^+ \, M_{2a}^-, M_{2a-1}^- \, L_{2a}^+, M_{2a-1}^- \, M_{2a}^+ \right\}.
$$
\end{proof}

\begin{rem}
In particular, we find that $g_{2k}Ê\, \prod_{s=1}^m L_s^+$ respectively $g_{2k+1} \, L_1^+ \, L_2^-\, \prod_{s=3}^m L_s^+$ are weight vectors of $\frak{h}$ in $\mathcal{M}_{2k}$ resp. $\mathcal{M}_{2k+1}$ of weight $(2k)'_+$ resp. $(2k+1)'_+$. 
\end{rem}

\begin{cor}
There are $2^{2m-n}$ weight vectors $g_k \, F$, with $F$ one of the above mentioned idempotents, of weight $(k)'_+$ and $2^{2m-n}$ weight vectors $g_k \, F$, with $F$ one of the above mentioned idempotents, with weight $(k)'_-$.
\end{cor}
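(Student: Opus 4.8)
The plan is to count, among the $4^m = 2^{2m}$ idempotents $F = \prod_{s=1}^m F_s$ (each factor $F_s$ ranging over the four elements $L_s^\pm, M_s^\pm$), exactly how many satisfy the parity conditions of Lemma \ref{lem:WeightVector} that turn $g_k\,F$ into a weight vector of weight $(k)'_+$, respectively $(k)'_-$.

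First I would group the $m=2n$ factors into the $n$ consecutive pairs $(F_{2a-1},F_{2a})$, $1\leqslant a\leqslant n$, and for each pair set $c_a := |F_{2a-1}| + \|F_{2a-1}\| + |F_{2a}| + \|F_{2a}\| \pmod 2$. A one-line inspection of the four values of a single factor shows that $|F_s| + \|F_s\|$ is even exactly when $F_s \in \{L_s^+, M_s^+\}$ and odd exactly when $F_s \in \{L_s^-, M_s^-\}$, so each single factor is ``even'' for $2$ of its $4$ values and ``odd'' for the other $2$. Consequently, among the $16$ possible pairs $(F_{2a-1},F_{2a})$ exactly $8$ yield $c_a \equiv 0$ and exactly $8$ yield $c_a \equiv 1$; these are precisely the two eight-element lists displayed at the end of the proof of Lemma \ref{lem:WeightVector}.

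Then I would rephrase the conditions of Lemma \ref{lem:WeightVector} in terms of the $c_a$: weight $(k)'_+$ amounts to $c_1 \equiv k$ and $c_a \equiv 0$ for $2 \leqslant a \leqslant n$, while weight $(k)'_-$ amounts to $c_1 \equiv k$, $c_a \equiv 0$ for $2 \leqslant a \leqslant n-1$, and $c_n \equiv 1$. Since the pairs are pairwise disjoint, the choices in distinct pairs are independent, and the number of admissible $F$ is the product over $a$ of the number of pairs with the prescribed parity. In each of the two cases every pair carries exactly one parity prescription, each fulfilled by $8$ of its $16$ possibilities, so both counts equal $8^n = 2^{3n} = 2^{4n-n} = 2^{2m-n}$, which is the assertion.

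There is no genuinely hard step here; the argument is bookkeeping. The only point deserving a word of care is that for the first pair the condition $c_1 \equiv k$ has $8$ solutions whatever the parity of the fixed integer $k$ (changing the parity of $k$ merely interchanges the two lists in Lemma \ref{lem:WeightVector}), and that the parity conditions really do decouple over the $n$ pairs, so that the per-pair counts multiply.
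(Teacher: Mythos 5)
Your proof is correct and is essentially the paper's own argument: the paper likewise counts eight admissible choices for each pair $F_{2s-1}F_{2s}$ and concludes $8^n = 2^{3n} = 2^{2m-n}$ for each of the two weights. Your additional bookkeeping (observing that $|F_s|+\|F_s\|$ is even for exactly two of the four values of each factor, hence eight of the sixteen pairs have each prescribed parity) merely makes explicit what the paper leaves to inspection of the two eight-element lists in Lemma \ref{lem:WeightVector}.
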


\begin{proof}
To obtain weight $(k)'_+$, one has eight choices for each factor $F_{2s-1} \, F_{2s}$ in $F$, $1 \leqslant s \leqslant n$. We thus get $8^{n} = 2^{3n} = 2^{4n-n}Ê= 2^{2m-n}$ choices for the idempotent $F$. The same count holds for the weight $(k)'_-$.  
\end{proof}

We will now show that the weight vectors, defined in Lemma \ref{lem:WeightVector} are actually highest weight vectors, i.e. that they vanish under the action of all positive roots. 
\begin{lemma}
The polynomials $g_k \, F$, with 
\begin{itemize}
\item $k+ |F_1| + |F_2| + \|F_1 \|Ê+ \|F_2 \|$ even
\item $\|ÊF_{2a-1}\|Ê+ \|F_{2a}\| + |F_{2a-1}|Ê+ |F_{2a}|$ even, $\forall \, 2 \leqslant a \leqslant n-1$, and 
\item $\|ÊF_{2n-1}\|Ê+ \|F_{2n}\| + |F_{2n-1}|Ê+ |F_{2n}|$ even resp. odd
\end{itemize}
are highest weight spaces with highest weight $(k)'_+$ resp. $(k)'_-$, i.e. 
$$
H_a \left( g_k\,F\right) = \left( \delta_{1a}Ê\left( k + \frac{1}{2}Ê\right) + \frac{1}{2}Ê\sum_{j \neq 1} \delta_{ja} \right) g_k \, F
$$
and 
$$
X_{a,b}Ê\left( g_k \, F \right) = 0, \; \forall (a,b), \, a < b, \qquad
Y_{a,b} \left( g_k \, F \right) = 0, \; \forall (a,b), \, a \neq b.
$$
\end{lemma}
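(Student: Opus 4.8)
The plan is to verify the two families of identities separately. The weight-vector statement $H_a(g_k\,F) = (\delta_{1a}(k+\tfrac12) + \tfrac12\sum_{j\neq 1}\delta_{ja})\,g_k\,F$ is, under the stated parity hypotheses, precisely the content of Lemma \ref{lem:WeightVector} restricted to the ``$+$'' case (for $2\leqslant a\leqslant n-1$) together with the designated parity at index $n$; so this half requires no new work beyond quoting that lemma and noting that the hypothesis ``$\|F_{2n-1}\|+\|F_{2n}\|+|F_{2n-1}|+|F_{2n}|$ even (resp.\ odd)'' forces $H_n(g_k\,F) = +\tfrac12 g_k\,F$ (resp.\ $-\tfrac12 g_k\,F$), giving highest weight $(k)'_+$ (resp.\ $(k)'_-$).

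The substance is showing the annihilation by positive root vectors. I would split the positive roots into three groups according to the $\{X,Y\}$-classification from Lemma 1 and handle each by the same mechanism: express the root vector $X_{a,b}$ ($a<b$) or $Y_{a,b}$ ($a\neq b$) via its definition in terms of $dR(e_{2a-1,2b-1})$, $dR(e_{2a-1,2b})$, $dR(e_{2a,2b-1})$, $dR(e_{2a,2b})$, unfold each $dR(e_{p,q})$ as $V_{p,q}\,(L_{p,q}-\tfrac12)\,(\cdot)\,e_p^\perp e_q^\perp$, and then use that $g_k$ involves only $\xi_1,\xi_2$. For any root vector whose index pair $(a,b)$ satisfies $2\leqslant a$ — i.e.\ none of the four underlying indices $2a-1,2a,2b-1,2b$ equals $1$ or $2$ — each $L_{p,q}$ annihilates $g_k$, so $dR(e_{p,q})(g_k\,F) = -\tfrac12 V_{p,q}\,g_k\,F\,e_p^\perp e_q^\perp$, and since $V_{p,q}$ commutes with $g_k$ here, this reduces to a pure right-multiplication computation on $F$ governed by the formulas of Lemma \ref{lem:rightmultiplicationL}. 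One then checks that the four terms assembling $X_{a,b}$ or $Y_{a,b}$ carry the complex coefficients $1, \pm i, \mp i, \pm 1$ in exactly the pattern that makes the sum of the four right-multiplication results collapse to zero — the same cancellation that produces a genuine root vector (raising operator) in the classical and harmonic settings. The remaining cases are those root vectors with $a=1$, where $g_k$ is no longer inert: here I would use the identities already derived in the proof of Lemma \ref{lem:WeightVector}, namely $L_{12}\,g_k = -k\,g_k$, $\p_j g_k = (-1)^j k f_{k-1}$, together with $V_{12}\,g_k = (-1)^k g_k V_{12}$ and the analogous $\e_1^\perp e_1\,\xi_1 = -\xi_1\,\e_1^\perp e_1$, $\e_1^\perp e_1\,\xi_2 = \xi_2\,\e_1^\perp e_1$ relations, to push everything onto $F$ and again invoke Lemma \ref{lem:rightmultiplicationL}; the parity conditions on $F_1 F_2$ (the ``$k$ even/odd'' lists in Lemma \ref{lem:WeightVector}) are what guarantee the cancellation there.

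I would organize the write-up as: (i) restate the $H_a$-action, citing Lemma \ref{lem:WeightVector}; (ii) treat $Y_{a,b}$ with $a,b\geqslant 2$ and $X_{a,b}$ with $2\leqslant a<b$ by the ``$g_k$ inert'' argument above, doing one representative computation in full and remarking the others are identical by symmetry of the coefficient pattern; (iii) treat $X_{1,b}$, $Y_{1,b}$ with $b\geqslant 2$, which only touch $F_1,F_2$ through $g_k$ and $F_{2b-1},F_{2b}$ through right multiplication; and (iv) treat $X_{a,b}, Y_{a,b}$ where the pair hits index $1$ or $2$ in the ``$b$'' slot, which by $X_{b,a}$ antisymmetry reduces to (iii). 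The main obstacle I anticipate is bookkeeping: keeping the signs $(-1)^{|F_p|+|F_q|+\|F_p\|+\|F_q\|}$ and the factors of $i$ from Lemma \ref{lem:rightmultiplicationL} aligned with the $\pm i$ coefficients in the definitions of $X_{a,b},Y_{a,b}$, so that the four-term sum visibly vanishes; it is conceptually routine but error-prone, and the cleanest safeguard is to exploit that the $dR(e_{p,q})$ satisfy the same commutator relations \eqref{eq:commutator_rule_dR} as the harmonic operators $\Omega_{p,q}$, so the algebraic identity ``$Y_{a,b}$ annihilates a highest-weight vector'' can be imported structurally once the weight-vector property and the highest-weight normalization are pinned down.
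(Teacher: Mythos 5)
Your plan is essentially the paper's proof: quote Lemma \ref{lem:WeightVector} for the $H_a$-action, split the positive roots according to whether $a=1$ (where $L_{1,2}\,g_k=-k\,g_k$, $\p_j g_k=(-1)^j k f_{k-1}$ and the commutation of $V_{p,q}$ past $g_k$ enter) or $1<a<b$ (where $g_k$ is inert and everything reduces to right multiplication on $F$ via Lemma \ref{lem:rightmultiplicationL}), obtain the $Y_{a,b}$ computation from the $X_{a,b}$ one by flipping the signs of the second and fourth terms, and dispose of $a>b$ via $Y_{a,b}=-Y_{b,a}$. Two caveats: in the inert case the four-term sum does \emph{not} collapse automatically from the coefficient pattern $1,\pm i,\mp i,\pm 1$ --- it vanishes only because $|F_{2a-1}|+\|F_{2a-1}\|+|F_{2a}|+\|F_{2a}\|$ is even for $2\leqslant a\leqslant n-1$, so the parity hypotheses are needed there as well, not only in the $a=1$ case; and your proposed ``safeguard'' of importing the annihilation structurally from the harmonic setting is circular, since whether this particular weight vector is killed by the raising operators is precisely the highest-weight property being proved and does not follow from the commutation relations \eqref{eq:commutator_rule_dR} alone.
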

\begin{proof}
We have already shown that these $g_k \, F$ are weight vectors with weight $(k)'_\pm$. We now show that $g_k\,F$ vanishes under the action of $X_{a,b}$, $a< b$, and $Y_{a,b}$, $a \neq b$. Note that $X_{a,b}\left( g_kÊ\, F \right)$ denotes the action of the operator $X_{a,b}$ on $g_kÊ\,F$; this is not a multiplication. 

\smallskip We first consider the action of $X_{a,b}$ on $g_k\, F$. We make a distinction between $a = 1$ and $a \neq 1$. First let $1 = a < b$, then 
\begin{align*}
2\, & X_{1,b} \, g_k \, F = \left(dR(e_{1,2b-1}) + i \, dR(e_{1,2b}) - i \, dR(e_{2,2b-1}) + dR(e_{2,2b})Ê\right) g_k \, F \\
&= V_{1,2b-1} \left(\xi_{2b-1} \, \p_1 - \frac{1}{2}Ê\right) g_kÊ\, F \,e_1^\perp \, e_{2b-1}^\perp + i \, V_{1,2b} \left( \xi_{2b}\,\p_1 - \frac{1}{2}Ê\right) g_k \, F \,e_1^\perp\,e_{2b}^\perp \\
&\phantom{=} - i \, V_{2,2b-1} \left(\xi_{2b-1} \, \p_2 - \frac{1}{2}Ê\right) g_kÊ\, F \,e_2^\perp \,e_{2b-1}^\perp + V_{2,2b} \left(\xi_{2b}\,\p_2 - \frac{1}{2}Ê\right) g_kÊ\, F \,e_2^\perp\, e_{2b}^\perp \\
&= V_{1,2b-1} \left( - k\, \xi_{2b-1}\, f_{k-1} - \frac{1}{2}Ê\, g_kÊ\right) F \,e_1^\perp\, e_{2b-1}^\perp + i \, V_{1,2b} \left( -k \, \xi_{2b}\,f_{k-1} - \frac{1}{2}Ê\, g_k \right) F \,e_1^\perp \,e_{2b}^\perp \\
&\phantom{=} - i \, V_{2,2b-1} \left( k \, \xi_{2b-1} \, f_{k-1} - \frac{1}{2}Ê\, g_k \right) F \,e_2^\perp \, e_{2b-1}^\perp + V_{2,2b} \left( k\, \xi_{2b}\,f_{k-1} - \frac{1}{2}\, g_kÊ\right) F \, e_2^\perp \, e_{2b}^\perp.
\end{align*}
Now we use 
$$
V_{1,2b-1} \, \xi_{2b-1}Ê= -\xi_{2b-1}Ê\, V_{1,2b-1}, \qquad V_{1,2b} \, \xi_{2b}Ê= -\xi_{2b}Ê\, V_{1,2b}.
$$
Furthermore, since for $b \neq 1,2$, 
\begin{align*}
V_{1,b} \left( \xi_2 \pm \xi_1 \right) &= \left( \xi_2 \mp \xi_1 \right) V_{1,b}, \\
V_{2,b}Ê\left( \xi_2 \pm \xi_1 \right) &= \left( -\xi_2 \pm \xi_1 \right) V_{2,b} = -\left( \xi_2 \mp \xi_1 \right) V_{2,b},
\end{align*}
we find, for $j \in \left\{ 2b-1, 2b\right\} $:
\begin{align*}
V_{1,j} \, g_kÊ&= V_{1,j} \left( \xi_2 - \xi_1 \right) \left( \xi_2 + \xi_1 \right) \left( \xi_2 - \xi_1 \right) \ldots = \left( \xi_2 + \xi_1 \right) \left( \xi_2 - \xi_1 \right) \left( \xi_2 + \xi_1 \right) \ldots V_{1,j} = f_k \, V_{1,j},  \\
V_{1,j} \, f_kÊ&= V_{1,j} \left( \xi_2 + \xi_1 \right) \left( \xi_2 - \xi_1 \right) \left( \xi_2 + \xi_1 \right) \ldots = \left( \xi_2 - \xi_1 \right) \left( \xi_2 + \xi_1 \right) \left( \xi_2 - \xi_1 \right) \ldots V_{1,j} = g_k \, V_{1,j},  \\
V_{2,j} \, g_kÊ&= V_{2,j} \left( \xi_2 - \xi_1 \right) \left( \xi_2 + \xi_1 \right) \left( \xi_2 - \xi_1 \right) \ldots = (-1)^k\, f_k \, V_{2,j},  \\
V_{2,j} \, f_kÊ&= V_{2,j} \left( \xi_2 + \xi_1 \right) \left( \xi_2 - \xi_1 \right) \left( \xi_2 + \xi_1 \right) \ldots = (-1)^k \, g_k \, V_{2,j}.
\end{align*} 
We get that
\begin{align*}
2\, X_{1,b} \, g_k \, F &= \left( k\, \xi_{2b-1}\,g_{k-1} - \frac{1}{2}Ê\, f_kÊ\right) V_{1,2b-1}\, F \,e_1^\perp\, e_{2b-1}^\perp + i \, \left( k \, \xi_{2b} \, g_{k-1} - \frac{1}{2}Ê\, f_k \right) V_{1,2b}\, F \,e_1^\perp \,e_{2b}^\perp \\
&\phantom{=} - i \left( (-1)^{1+ k-1} \, k \, \xi_{2b-1} \, g_{k-1} - (-1)^k \, \frac{1}{2}Ê\, f_k \right) V_{2,2b-1} \, F \,e_2^\perp \, e_{2b-1}^\perp \\
&\phantom{=} + \left( (-1)^{1 + k-1} \, k\, \xi_{2b}\, g_{k-1} - \frac{1}{2}\, (-1)^k \, f_kÊ\right) V_{2,2b}\, F \, e_2^\perp \, e_{2b}^\perp.
\end{align*}
We now use that 
\begin{align*}
V_{1,2b-1} \,F \, e_1^\perp \, e_{2b-1}^\perp &= (-1)^{|F_1| + |F_{2b-1}| + \|F_1 \|Ê+ \|F_{2b-1} \| + 1} \, F^{2,2b-1}, \\
V_{1,2b} \, F \, e_{1}^\perp \, e_{2b}^\perp &= (-1)^{|F_{1}| + |F_{2b}| + \|F_{1} \|Ê+ \|F_{2b} \|} \, i\, F^{2,2b-1}, \\
V_{2,2b-1} \, F \, e_{2}^\perp \, e_{2b-1}^\perp &= (-1)^{|F_{2}| + |F_{2b-1}| + \|F_{2} \|Ê+ \|F_{2b-1} \|} \, i \, F^{2,2b-1}, \\
V_{2,2b} \, F \, e_{2}^\perp \, e_{2b}^\perp &= (-1)^{|F_{2}| + |F_{2b}| + \|F_{2} \|Ê+ \|F_{2b} \|} \, F^{2,2b-1}.
\end{align*}
This results in 
\begin{align*}
& 2\, X_{1,b} \, g_k \, F = (-1)^{|F_1| + \|F_1 \|Ê+ |F_{2b-1}| + \|F_{2b-1} \| }Ê\\
&\phantom{=} \left( -k\, \xi_{2b-1}\,g_{k-1} + \frac{1}{2}Ê\, f_k + (-1)^{|F_{2b-1}| + \|F_{2b-1} \| + |F_{2b}| + \|F_{2b} \|} \left( - k \, \xi_{2b} \, g_{k-1} + \frac{1}{2}Ê\, f_k \right) \right. \\
&\phantom{=} + \left( (-1)^k \, k \, \xi_{2b-1} \, g_{k-1} - (-1)^k \, \frac{1}{2}Ê\, f_k \right) (-1)^{|F_1|Ê+ \| F_1 \| + |F_{2}| + \|F_{2} \|}  \\
&\phantom{=} \left. + \left( (-1)^k \, k\, \xi_{2b}\, g_{k-1} - \frac{1}{2}\, (-1)^k \, f_kÊ\right)  (-1)^{|F_1|Ê+ \| F_1 \| + |F_{2}| + \|F_{2} \| + |F_{2b-1}| + |F_{2b}| + \|F_{2b-1} \|Ê+ \|F_{2b} \|} \right) F^{2,2b-1}.
\end{align*}
We thus see that this vanishes when 
$$
k+ |F_1|Ê+ \| F_1 \| + |F_{2}| + \|F_{2} \|
$$
is even.

\medskip For $1 < a < b \leqslant n$ we get that 
\begin{align*}
2\,& X_{a,b} \, g_k \, F = -\frac{1}{2}Ê\left( V_{2a-1,2b-1} \, g_kÊ\, F \,e_{2a-1}^\perp \, e_{2b-1}^\perp  + i \, V_{2a-1,2b} \, g_kÊ\, F \, e_{2a-1}^\perp \,e_{2b}^\perp \right.  \\
&\phantom{===} \left. - i \, V_{2a,2b-1} \, g_kÊ\, F \, e_{2a}^\perp \,e_{2b-1}^\perp + V_{2a,2b} \, g_kÊ\, F \, e_{2a}^\perp \, e_{2b}^\perp \right) \\
&= -\frac{1}{2}Ê\, g_k \left( (-1)^{|F_{2a-1}| + |F_{2b-1}| + \|F_{2a-1} \|Ê+ \|F_{2b-1} \| + 1} + (-1)^{|F_{2a-1}| + |F_{2b}| + \|F_{2a-1} \|Ê+ \|F_{2b} \| + 1} \right.  \\
&\phantom{===} \left. + (-1)^{|F_{2a}| + |F_{2b-1}| + \|F_{2a} \|Ê+ \|F_{2b-1} \|} + (-1)^{|F_{2a}| + |F_{2b}| + \|F_{2a} \|Ê+ \|F_{2b} \|} \right) F^{2a,2b-1} \\
&= -\frac{1}{2}Ê\, g_k\, (-1)^{|F_{2a-1}| + \|F_{2a-1} \|Ê+ |F_{2b-1}| + \|F_{2b-1} \|} \\
&\phantom{=} \left(  -1 + (-1)^{|F_{2b-1}| + \| F_{2b-1}\| + |F_{2b}| + \|F_{2b} \| + 1} \right.  \\
&\phantom{=} \left. + (-1)^{ |F_{2a-1}|Ê+ \|F_{2a-1} \| + |F_{2a}| + \|F_{2a} \|} + (-1)^{ |F_{2a-1}|Ê+ \|F_{2a-1} \| + |F_{2a}| + \|F_{2a} \|+ |F_{2a}| + \|F_{2a} \|Ê+ |F_{2b}| + \|F_{2b} \|} \right) F^{2a,2b-1}.
\end{align*}
This will be zero when  $ |F_{2a-1}|Ê+ \|F_{2a-1} \| + |F_{2a}| + \|F_{2a} \|$ is even, and this for all $2 \leqslant a \leqslant n-1$. 

\medskip Note that:
\begin{align*}
X_{a,b} &= \frac{1}{2} \left( dR(e_{2a-1,2b-1}) + i \, dR(e_{2a-1,2b}) - i \, dR(e_{2a,2b-1}) + dR(e_{2a,2b})Ê\right), \\
Y_{a,b} &= \frac{1}{2} \left( dR(e_{2a-1,2b-1}) - i \, dR(e_{2a-1,2b}) - i \, dR(e_{2a,2b-1}) - dR(e_{2a,2b})Ê\right).\end{align*}
If we apply the appropriate change of sign in the second and last term of previous calculations, we immediately get that $Y_{a,b}(g_k \, F) = 0$ for $a < b$. Since $Y_{a,b} = - Y_{b,a}$, this will also be zero for $a > b$.
\end{proof}

\begin{rem}
In particular, the polynomial $g_{2k}Ê\, \prod_{s=1}^m L_s^+$ and $g_{2k+1}Ê\, L_1^+ \, L_2^- \, \prod_{s=3}^m L_s^+$ are highest weight vectors with weight $(2k)'_+$ resp. $(2k+1)'_+$. 
\end{rem}

\begin{rem}
The dimension of $\left(k\right)'_\pm$ is (see \cite{FH}) 
$$
2^{n-1} \, \binom{k+m-2}{k}.
$$
As the dimension of $\mathcal{M}_k$ equals 
$$
2^{2m} \, \binom{k+m-2}{k} = 2^{4n} \, \binom{k+m-2}{k}
$$
and as we found $2^{3n}$ isomorphic copies of $\left(k\right)'_+$ combined with $2^{3n}$ copies of $\left(k\right)'_-$, the space $\mathcal{M}_k$ is fully decomposed in $2^{3n}$ copies of $\left(k\right)'_+$ and $2^{3n}$ copies of $\left(k \right)'_-$.
\end{rem}

\begin{defi}
We define the positive resp. negative spinorspace $\mathbb{S}_{2n}^\pm$ as the image under $\gso(m,\mathbb{C})$ of the idempotents $\prod_{s=1}^m L_s^+$, resp. $\left( \prod_{s=1}^{m-1} L_s^+ \right) L_m^-$:
$$
\mathbb{S}_{2n}^+ = \gso(m,\mathbb{C}) \left( \textup{span}_{\mathbb{C}} \left\{ L_1^+ \,L_2^+ \ldots \, L_{2n-1}^+ \, L_{2n}^+ Ê\right\} \right)
$$
and 
$$
\mathbb{S}_{2n}^- = \gso(m,\mathbb{C}) \left( \textup{span}_{\mathbb{C}} \left\{ L_1^+ \,L_2^+ \ldots \, L_{2n-1}^+ \, L_{2n}^-  \right\} \right).
$$
\end{defi}
The elements $L_1^+ \,L_2^+ \ldots \, L_{2n-1}^+ \, L_{2n}^+$, resp. $L_1^+ \,L_2^+ \ldots \, L_{2n-1}^+ \, L_{2n}^-$ are highest weight vectors with weight $(0)'_+ = \left( \frac{1}{2}, \ldots, \frac{1}{2}\right)$ resp. $(0)'_- = \left(\frac{1}{2}, \ldots, \frac{1}{2},-\frac{1}{2}\right)$ and they thus generate irreducible representations with the same weight. 

\begin{example}
Let $m = 4$ (i.e. $n=2$) and consider $L = L_1^+ \,L_2^+ \, L_3^+ \, L_4^+$. The Lie algebra $\gso(4,\mathbb{C})$ is given in this context by 
$$
\textup{span}_{\mathbb{C}} \left\{ dR(e_{12}), dR(e_{13}), dR(e_{14}), dR(e_{23}), dR(e_{24}), dR(e_{34}) \right\}.
$$
The elements $dR(e_{12})$ and $dR(e_{34})$ return $L$ up to complex constant. The other four rotations give us (up to a complex constant) the idempotent $L_1^+\,L_2^- \, L_3^-\, L_4^+$. Hence
$$
\mathbb{S}^+_4 = \textup{span}_{\mathbb{C}} \left\{ L_1^+ \,L_2^+ \, L_3^+ \, L_4^+, L_1^+ \,L_2^- \, L_3^- \, L_4^+\right\}.
$$
Starting from $L_1^+ \,L_2^+ \, L_3^+ \, L_4^-$, the rotations $dR(e_{13})$, $dR(e_{14})$, $dR(e_{23})$ and $dR(e_{24})$ lead us to the idempotent $L_1^+ \,L_2^- \, L_3^- \, L_4^-$ which shows that 
$$
\mathbb{S}^-_4 = \textup{span}_{\mathbb{C}} \left\{ L_1^+ \,L_2^+ \, L_3^+ \, L_4^-, L_1^+ \,L_2^- \, L_3^- \, L_4^-\right\}.
$$
The (positive/negative) spinorspace $\mathbb{S}^\pm_{2n}$ is $2^{n-1}$-dimensional. 
\end{example}


In general, the elements $dR(e_{2a-1,2a})$ acting on an idempotent return the same idempotent up to a multiplicative complex factor. Since, for $1 \leqslant a < b \leqslant n$:
\begin{align*}
V_{2a-1,2b-1} \, L \, e_{2a-1}^\perp \, e_{2b-1}^\perp &= -L^{a,b}, &
V_{2a-1,2b}Ê\, L \, e_{2a-1}^\perp \, e_{2b}^\perp &= i\,L^{a,b}, \\
V_{2a,2b-1} \, L \, e_{2a}^\perp \, e_{2b-1}^\perp &= i\, L^{a,b}, &
V_{2a,2b} \, L \, e_{2a}^\perp \, e_{2b}^\perp &= L^{a,b}.
\end{align*}
with $L^{a,b} = L_1^+ \, L_2^+ \, \ldots L_{2a-1}^+ \, L_{2a}^- \ldots L_{2b-1}^- \, L_{2b}^+ \ldots L_{2n-1}^+ \, L_{2n}^+$, we see that $dR(e_{a,b})$ acting on  
$$
L = L_1^+ \, L_2^+ \ldots L_{2n-1}^+ \, L_{2n}^+
$$
changes the sign of an even number of $L_a$'s. The operator always leaves $L_1^+$ and $L_{2n}^+$ invariant. The resulting idempotent will always have an even number of minus-signs. Starting from the idempotent $L$ with all plus-signs, we thus get all possible idempotents of the following form:
$$
L_1^+ \underbrace{ \, . \quad . \, } \; \underbrace{ \, . \quad . \, } \; \ldots \; \underbrace{ \, . \quad . \, }\; L_{2n}^+.
$$
where each place $\; \underbrace{ \, . \quad . \, } \; $ consists of either $L_{2a}^+ \, L_{2a+1}^+$ or $L_{2a}^- \, L_{2a+1}^-$, $1 \leqslant a \leqslant n-1$. We get $2^{n-1}$ spinors belonging to the positive spinorspace and we have the following weight space decomposition
$$
\mathbb{S}_{2n}^+ = \bigoplus V_{\left( \pm \frac{1}{2},  \pm \frac{1}{2},\ldots, \pm \frac{1}{2}\right)},
$$
where the sum goes over all weights with an even number of minus-signs. The highest weight remains $\left( \frac{1}{2}, \ldots, \frac{1}{2}\right)$ and the highest weight vector is $L$. 

\medskip Starting from $L_1^+ \, L_2^+ \ldots L_{2n-1}^+ \, L_{2n}^-$, we will generate all possible idempotents of the following form:
$$
L_1^+ \underbrace{ \, . \quad . \, } \; \underbrace{ \, . \quad . \, } \; \ldots \; \underbrace{ \, . \quad . \, }\; L_{2n}^-.
$$
where each place $\; \underbrace{ \, . \quad . \, } \; $ consists of either $L_{2a}^+ \, L_{2a+1}^+$ or $L_{2a}^- \, L_{2a+1}^-$, $1 \leqslant a \leqslant n-1$. We thus also get $2^{n-1}$ spinors belonging to the negative spinorspace and the following weight space decomposition: 
$$
\mathbb{S}_{2n}^- = \bigoplus V_{\left( \pm \frac{1}{2},  \pm \frac{1}{2},\ldots, \pm \frac{1}{2}\right)},
$$
where the sum goes over all weights with an odd number of minus-signs. The highest weight is still $\left(\frac{1}{2}, \ldots, \frac{1}{2},-\frac{1}{2}\right)$ and the highest weight vector is $L_1^+ \, L_2^+ \ldots L_{2n-1}^+ \, L_{2n}^-$.

\subsection{Odd dimension $m=2n+1$}
We now extend the set of generators $H_a$, $X_{a,b}$, $Y_{a,b}$ and $Z_{a,b}$ of $\gso(m,\mathbb{C})$ with $2n$ mappings
\begin{align*}
U_a &= \frac{1}{\sqrt{2}}Ê\left( dR(e_{2a-1,m}) - i \, dR(e_{2a,m}) \right), \\
V_a &= \frac{1}{\sqrt{2}}Ê\left( dR(e_{2a-1,m}) + i \, dR(e_{2a,m}) \right),
\end{align*}
where $1 \leqslant a \leqslant n$. With the addition of these $2n$ mappings, we are again able to reconstruct all original $dR(e_{a,b})$'s since $\sqrt{2} \; dR(e_{2a-1,m}) = U_a + V_a$ and $-\sqrt{2} \,i \, dR(e_{2a,m}) = U_a - V_a$.

\medskip The classic commutator relations follow immediately. 
\begin{lemma}
For $1 \leqslant a, b \leqslant n$, it holds that
\begin{align*}
\left[ H_a, U_b \right] &= \delta_{ab} \, U_b = L_b(H_a) \, U_b, \\
\left[ H_a, V_b \right] &= -\delta_{ab}Ê\, V_b = -L_b(H_a) \, V_b.
\end{align*}
In particular, $U_b$ is a root vector corresponding to the positive root $L_b$ and $V_b$ is a root vector corresponding with the negative root $-L_b$, $\forall \, 1 \leqslant b \leqslant n$.
\end{lemma}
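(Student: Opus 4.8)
The plan is to prove both bracket identities by a direct computation, reducing everything to the defining commutator relations \eqref{eq:commutator_rule_dR} among the generators $dR(e_{c,d})$ and then exploiting bilinearity of the Lie bracket. Since $H_a = i\, dR(e_{2a-1,2a})$ and both $U_b$ and $V_b$ are $\mathbb{C}$-linear combinations of $dR(e_{2b-1,m})$ and $dR(e_{2b,m})$, it suffices to evaluate the two auxiliary brackets $[dR(e_{2a-1,2a}), dR(e_{2b-1,m})]$ and $[dR(e_{2a-1,2a}), dR(e_{2b,m})]$ and then assemble the result.

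First I would apply \eqref{eq:commutator_rule_dR} with the index substitutions $(a,b,c,d) \mapsto (2a-1,2a,2b-1,m)$ and $(a,b,c,d) \mapsto (2a-1,2a,2b,m)$. The decisive simplification is that $m = 2n+1$ is \emph{odd}, so $2a-1 \neq m$ and $2a \neq m$ for every $1 \leqslant a \leqslant n$; hence every Kronecker delta pairing one of the indices $2a-1,2a$ with $m$ vanishes. Moreover an even index never equals an odd one, so $\delta_{2a,2b-1} = 0$ and $\delta_{2a-1,2b} = 0$ as well. What survives is $\delta_{2a-1,2b-1} = \delta_{a,b}$ in the first bracket and $\delta_{2a,2b} = \delta_{a,b}$ in the second, giving
$$
[dR(e_{2a-1,2a}), dR(e_{2b-1,m})] = -\delta_{a,b}\, dR(e_{2a,m}), \qquad [dR(e_{2a-1,2a}), dR(e_{2b,m})] = \delta_{a,b}\, dR(e_{2a-1,m}).
$$
Substituting back, $[H_a, U_b] = \tfrac{i}{\sqrt 2}\bigl( -\delta_{a,b}\, dR(e_{2a,m}) - i\,\delta_{a,b}\, dR(e_{2a-1,m})\bigr) = \tfrac{\delta_{a,b}}{\sqrt 2}\bigl( dR(e_{2a-1,m}) - i\, dR(e_{2a,m})\bigr) = \delta_{a,b}\, U_a = \delta_{a,b}\, U_b$ after a one-line regrouping of the factors of $i$. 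The computation for $V_b$ is identical except for the sign of the $dR(e_{2b,m})$ contribution and yields $[H_a, V_b] = -\delta_{a,b}\, V_b$. Rewriting $\delta_{a,b} = L_b(H_a)$ gives the stated form.

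Finally, the \emph{in particular} assertion is immediate: comparing $[H_a,U_b] = L_b(H_a)\, U_b$ and $[H_a,V_b] = -L_b(H_a)\, V_b$ with the root table for $\gso(2n+1,\mathbb{C})$ recorded in Section~3 shows that $U_b$ lies in the root space of $L_b$ (a positive root) and $V_b$ in the root space of $-L_b$ (a negative root), so they are the sought root vectors. I do not expect any real obstacle here: once \eqref{eq:commutator_rule_dR} is available, the lemma is bilinearity plus Kronecker-delta bookkeeping. The only point deserving care is the parity argument that annihilates every delta involving the index $m$ — this is precisely where oddness of the dimension enters, and it is what prevents extra $X$-, $Y$- or $Z$-type terms from appearing and makes $U_b, V_b$ behave exactly like the abstract $\gso$ root vectors.
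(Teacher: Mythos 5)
Your proposal is correct and follows exactly the route the paper intends: the paper gives no explicit proof beyond "the classic commutator relations follow immediately" from the definitions and the defining relations (\ref{eq:commutator_rule_dR}), and your computation — reducing $[H_a,U_b]$ and $[H_a,V_b]$ by bilinearity to the two brackets $[dR(e_{2a-1,2a}),dR(e_{2b-1,m})]$ and $[dR(e_{2a-1,2a}),dR(e_{2b,m})]$, killing the irrelevant Kronecker deltas by the parity/range argument, and regrouping the factors of $i$ — is precisely that "immediate" verification carried out in full. The signs and surviving terms all check out.
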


\begin{lemma}
The operators $U_c$ and $V_d$ satisfy the following additional commutator relations with $X_{a,b}$, $Y_{a,b}$ and $Z_{a,b}$, $1 \leqslant a, b, c,d \leqslant n$:
\begin{align*}
\left[ U_c, X_{a,b} \right]Ê&= -\delta_{cb}\,U_a, & 
\left[ V_c, X_{a,b} \right]Ê&= \delta_{ca}\,V_b, \\
\left[ U_c, Y_{a,b} \right]Ê&= 0, &
\left[ V_c, Y_{a,b} \right]Ê&= \delta_{ca}\,U_b - \delta_{cb} \, U_a, \\
\left[ U_c, Z_{a,b} \right]Ê&= -\delta_{cb}\,V_a + \delta_{ca} \, V_b, &
\left[ V_c, Z_{a,b} \right]Ê&= 0, \\
\left[ U_c, U_d \right]Ê&= -Y_{c,d}, \ c \neq d, & 
\left[ V_c, V_{d} \right]Ê&= -Z_{c,d}, \ c \neq d, \\
\left[ U_c, V_{d} \right]Ê&= \begin{cases} 
	-X_{c,d}, & c \neq d, \\
	-H_c, & c = d. 
	\end{cases}
\end{align*}
\end{lemma}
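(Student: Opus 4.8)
The plan is to reduce every identity to the single structural relation \eqref{eq:commutator_rule_dR} for the operators $dR(e_{p,q})$, together with bilinearity and antisymmetry of the bracket. First I would substitute the defining expressions of $U_c$, $V_d$, $X_{a,b}$, $Y_{a,b}$, $Z_{a,b}$ (all given as explicit $\mathbb{C}$-linear combinations of the $dR(e_{p,q})$) into each of the brackets in the statement and expand by bilinearity; for instance $[U_c, X_{a,b}]$ becomes a linear combination, with coefficients in $\tfrac{1}{2\sqrt2}\{\pm1,\pm i\}$, of eight brackets of the form $[dR(e_{p,m}), dR(e_{q,r})]$ with $p,q,r \in \{1,\ldots,2n\}$, while $[U_c,U_d]$, $[V_c,V_d]$, $[U_c,V_d]$ each become a combination of four brackets $[dR(e_{p,m}), dR(e_{q,m})]$.

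Next I would apply \eqref{eq:commutator_rule_dR} termwise. The decisive simplification — and the only place the parity of the dimension enters — is that $m = 2n+1$ differs from every index $2j-1$ and $2j$, $1\le j\le n$; hence in $[dR(e_{p,m}), dR(e_{q,r})]$ with $p,q,r\le 2n$ only the Kronecker deltas $\delta_{p,q}$ and $\delta_{p,r}$ not touching the slot $m$ can be nonzero, leaving $dR(e_{m,\bullet})$ terms, whereas in $[dR(e_{p,m}),dR(e_{q,m})]$ the term $\delta_{m,m}=1$ survives and (using $dR(e_{m,m})=0$) produces a single $dR(e_{p,q})$ with both indices in $\{1,\ldots,2n\}$. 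After this collapse I would recombine: the surviving $dR(e_{\bullet,m})$ terms are packaged back into $U_\bullet$ and $V_\bullet$ via $\sqrt2\,dR(e_{2a-1,m}) = U_a+V_a$ and $-\sqrt2\,i\,dR(e_{2a,m}) = U_a-V_a$, giving the claimed right-hand sides $-\delta_{cb}U_a$, $\delta_{ca}V_b$, etc.; and the surviving $dR(e_{p,q})$ with $p,q\le 2n$ are exactly the combinations defining $Y_{c,d}$, $Z_{c,d}$, $X_{c,d}$ and $H_c$, which yields $[U_c,U_d]=-Y_{c,d}$, $[V_c,V_d]=-Z_{c,d}$, $[U_c,V_d]=-X_{c,d}$ ($c\ne d$) and $[U_c,V_c]=-H_c$. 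I would organize the bookkeeping into the two blocks ($U,V$ against $X,Y,Z$; then $U,V$ against $U,V$) and, exactly as in the proof of the lemma relating $Y_{a,b}$ to $X_{a,b}$, use that swapping $U\leftrightarrow V$ amounts to $i\mapsto -i$ in the defining formulas, so that only half the cases need to be checked by hand.

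The main obstacle is purely combinatorial: across the four-term expansions one must keep the eight sign/$i$-patterns attached to the choices ``$2a-1$ versus $2a$'' correctly aligned so that the recombination lands precisely on the asserted constants, with no stray factor of $\pm i$. A cleaner alternative worth stating is that the abstract generators $e_{p,q}$ already satisfy \eqref{eq:commutator_rule_dR}, so $dR$ is a homomorphism of Lie algebras and the asserted identities are merely the standard structure constants of $\gso(2n+1,\mathbb{C})$ in the Chevalley-type basis $\{H_a, X_{a,b}, Y_{a,b}, Z_{a,b}, U_a, V_a\}$; one may thus either quote this fact or run the computation once in the abstract algebra. Either route is routine once \eqref{eq:commutator_rule_dR} is in hand.
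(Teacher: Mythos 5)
Your proposal is correct and follows essentially the same route as the paper, whose proof simply states that the relations follow from the definitions of $U_c$, $V_d$ and the defining relations \eqref{eq:commutator_rule_dR}; you merely spell out the bilinear expansion, the collapse of the Kronecker deltas (using that $m=2n+1$ never coincides with an index $\leqslant 2n$), and the repackaging into $U$, $V$, $X$, $Y$, $Z$, $H$. No gap.
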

\begin{proof}
The statements follow immediately from the definitions of $U_c$ and $V_d$ and from the defining relations (\ref{eq:commutator_rule_dR}) which the operators $dR(e_{a,b})$ satisfy. 
\end{proof}

We now introduce four extra idempotents
$$
L_m^\pm = \left( \e_m^+ \e_m^- \pm i\, \e_m^+ \right), \qquad M_m^\pm = \left( \e_m^-\e_m^+ \pm \e_m^-\right)
$$
and denote
$$
L = \prod_{a=1}^n \left( L_{2a-1}^+ \, L_{2a}^+ \right) L_m^+, \qquad L' = L_1^+ \, L_2^-Ê\, \prod_{a=2}^n \left( L_{2a-1}^+ \, L_{2a}^+ \right) L_m^+.
$$
We will now show that the highest weight vectors of weight $(k)'_+$ from the even-dimensional setting are still highest weight vectors with weight $(k)'_+$ when we add one of the four possible extra factors to the idempotent $F$. 

\begin{lemma}
The weight vectors $g_k \, F$, $F = \prod_{s=1}^m F_s$ with $F_s \in \left\{ L_s^\pm, M_s^\pm \right\}$, such that 
\begin{itemize}
\item $k+ |F_1| + |F_2| + \|F_1 \|Ê+ \|F_2 \|$ even
\item $\|ÊF_{2a-1}\|Ê+ \|F_{2a}\| + |F_{2a-1}|Ê+ |F_{2a}|$ even, $\forall \, 2 \leqslant a \leqslant n$,
\end{itemize}
vanish under the operator $U_a$, $1 \leqslant a \leqslant n$, i.e. 
$$
U_a \left( g_kÊ\,F \right) = 0, \qquad \forall \, 1 \leqslant a \leqslant n.
$$
\end{lemma}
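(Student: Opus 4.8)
The strategy is to mimic the computation already carried out for the operators $X_{a,b}$ in the even-dimensional case, now with the root vector $U_a = \frac{1}{\sqrt2}\bigl(dR(e_{2a-1,m}) - i\,dR(e_{2a,m})\bigr)$. First I would write out $\sqrt2\,U_a(g_k\,F)$ explicitly using the definition $dR(e_{c,d})f = V_{c,d}(L_{c,d}-\tfrac12)f\,e_c^\perp e_d^\perp$, splitting into the two terms indexed by $2a-1$ and $2a$. For each term I would separate the action of $L_{c,m} = \xi_c\,\p_m + \xi_m\,\p_c$ on $g_k$: since $g_k$ only involves $\xi_1,\xi_2$, we have $\p_m\,g_k=0$, so $L_{c,m}\,g_k = \xi_m\,\p_c\,g_k$. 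Using $\p_1 g_k = -k f_{k-1}$, $\p_2 g_k = k f_{k-1}$ (the relation $\p_j g_k = (-1)^j k f_{k-1}$ quoted earlier), the two pieces become $\xi_m\,\p_c\,g_k - \tfrac12 g_k$ acted on from the left by $V_{c,m}$.

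Next I would push the $V$-factors past $g_k$ (or $f_{k-1}$) just as in the $X_{a,b}$ computations: for $c\in\{2a-1,2a\}$ with $a\ge 2$ the factor $V_{c,m}$ commutes or anticommutes with $g_k$ in a controlled way (it leaves $\xi_1,\xi_2$ alone, picking up at most a global sign), and $V_{c,m}\,\xi_m = -\xi_m\,V_{c,m}$. Then I would invoke Lemma~\ref{lem:rightmultiplicationL} to evaluate $V_{2a-1,m}\,F\,e_{2a-1}^\perp e_m^\perp$ and $V_{2a,m}\,F\,e_{2a}^\perp e_m^\perp$, each producing the \emph{same} shifted idempotent $F^{2a,m}$ (in the notation $F^{s_1,s_2}$) up to an explicit sign and power of $i$ depending on $|F_{2a-1}|,|F_{2a}|,|F_m|,\|F_{2a-1}\|,\|F_{2a}\|,\|F_m\|$. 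The factor of $i$ in $U_a$ and the factor of $i$ coming from $V_{2a,m}$ versus $V_{2a-1,m}$ should be arranged so that the two contributions combine with a relative sign governed precisely by the parity of $\|F_{2a-1}\|+\|F_{2a}\|+|F_{2a-1}|+|F_{2a}|$. When this parity is even (the hypothesis), the two terms cancel.

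For $a=1$ the calculation is slightly different because $g_k$ genuinely depends on $\xi_1,\xi_2$, so $V_{1,m}$ and $V_{2,m}$ convert $g_k\leftrightarrow f_k$ and $g_{k-1}\leftrightarrow f_{k-1}$ with the sign rules $V_{1,j}g_k = f_k V_{1,j}$, $V_{2,j}g_k=(-1)^k f_k V_{2,j}$ established in the $X_{1,b}$ proof; one also needs $\p_1 g_k = -kf_{k-1}$ and $\p_2 g_k = kf_{k-1}$ to see that the $\xi_m\,\p_c g_k$ pieces combine into $\xi_m g_{k-1}$ terms that cancel against each other, while the $-\tfrac12 g_k$ pieces cancel separately — the cancellation being controlled by the parity of $k+|F_1|+|F_2|+\|F_1\|+\|F_2\|$, which is exactly the first hypothesis. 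The main obstacle is bookkeeping: getting all the signs and powers of $i$ from Lemma~\ref{lem:rightmultiplicationL}, from the anticommutation $V_{c,m}\xi_m=-\xi_m V_{c,m}$, and from the $\tfrac{1}{\sqrt2}$-normalized combination to line up so that the four (or two) summands cancel pairwise rather than merely simplify. Once the signs are verified the cancellation is forced by the stated parity conditions, so no new idea beyond the even-dimensional template is needed.
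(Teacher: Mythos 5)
Your plan follows the paper's proof essentially step for step: the same split into $a=1$ and $a\neq 1$, the same use of $\p_j g_k=(-1)^j k f_{k-1}$ and of the commutation rules $V_{1,j}g_k=f_kV_{1,j}$, $V_{2,j}g_k=(-1)^kf_kV_{2,j}$, $V_{c,m}\xi_m=-\xi_mV_{c,m}$, followed by the evaluation of $V_{c,m}\,F\,e_c^\perp e_m^\perp$ as a signed multiple of $F^{2a,m}$ and the pairwise cancellation forced by the two stated parity conditions. The remaining work is only the sign bookkeeping you acknowledge, which the paper carries out explicitly and which confirms your claimed cancellations.
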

\begin{proof}
Consider
\begin{align*}
\sqrt{2}Ê\, U_a \left( g_kÊ\, F \right) &= \left( dR(e_{2a-1,m})Ê- i \, dR(e_{2a,m}Ê\right) g_kÊ\, F.
\end{align*}
Since $g_k$ contains only $\xi_1$ and $\xi_2$, we will make a distinction between $a = 1$ and $a \neq 1$. We start with assuming that $a=1$. Then 
\begin{align*}
\sqrt{2}Ê\,& U_1 \left( g_kÊ\, F \right) = i\, V_{1,m} \left( \xi_m \, \p_1 - \frac{1}{2} \right) g_kÊ\, F \,e_1^\perp  \, e_m^\perp - i^2 \, V_{2,m} \left( \xi_m \,\p_2 - \frac{1}{2}Ê\right) g_kÊ\, F \, e_2^\perp \, e_m^\perp \\
&= i\, V_{1,m} \left( -k \, \xi_m \, f_{k-1} - \frac{1}{2} \, g_k \right) F \,e_1^\perp  \, e_m^\perp + V_{2,m} \left( k\, \xi_m \, f_{k-1} - \frac{1}{2}Ê\, g_k \right) F \, e_2^\perp \, e_m^\perp.
\end{align*}
Now we again use that, for $j \neq 1,2$:
$$
V_{1,j} \, f_k = g_k \, V_{1,j}, \quad 
V_{2, j}Ê\,f_k = (-1)^k \, g_k \, V_{2,j}, \quad
V_{1,j}Ê\,g_k = f_k \, V_{1,j}, \quad
V_{2,j}Ê\,g_k = (-1)^k \, f_k \, V_{2,j}. 
$$ 
Hence
\begin{align*}
& \sqrt{2}Ê\,U_1 \left( g_kÊ\, F \right) \\
&= i\, \left( k \, \xi_m \, V_{1,m} \, f_{k-1} - \frac{1}{2} \, V_{1,m} \, g_k \right) F \,e_1^\perp  \, e_m^\perp + \left( -k\, \xi_m \, V_{2,m} \, f_{k-1} - \frac{1}{2}Ê\, V_{2,m} \, g_k \right) F \, e_2^\perp \, e_m^\perp \\
&= i\, \left( k \, \xi_m \, g_{k-1} - \frac{1}{2} \, f_k \right) V_{1,m} \, F \,e_1^\perp  \, e_m^\perp + \left( (-1)^{k-1+1} \, k\, \xi_m \, g_{k-1} - \frac{1}{2}Ê\, (-1)^k \, f_k \right) V_{2,m} \, F \, e_2^\perp \, e_m^\perp.
\end{align*}
We complete the proof by noting that 
\begin{align*}
V_{1,m}Ê\, F \, e_1^\perp \, e_m^\perp &= (-1)^{\|F_1\|Ê+ \|F_m\|Ê+ 1} \, F \, e_1^\perp \,e_m^\perp = (-1)^{\|F_1\|Ê+ \|F_m\|Ê+ 1} \, F_1 \, e_1^\perp \, \widetilde{F}_2 \ldots \widetilde{F}_{m-1}Ê\, \widetilde{F}_m \,e_m^\perp \\
&= (-1)^{\|F_1\|Ê+ |F_1|Ê+ \|F_m\| + |F_m|Ê} \, i^2 \, F_1 \, \widetilde{F}_2 \ldots \widetilde{F}_{m-1}Ê\, \widetilde{F}_m  \\
&= (-1)^{\|F_1\|Ê+ |F_1|Ê+ \|F_m\| + |F_m|Ê+ 1} \, F^{2,m}, \\ 
V_{2,m}Ê\, F \, e_2^\perp \, e_m^\perp &= (-1)^{\|ÊF_2\|Ê+ \|F_m \|Ê+ 1} \, F_1 \, F_2 \, e_2^\perp \, \widetilde{F}_3\, \widetilde{F}_4 \ldots \widetilde{F}_{m-1}Ê\, \widetilde{F}_m \,e_m^\perp \\
&= (-1)^{\|ÊF_2\| + |F_2|Ê+ \|F_m \| + |F_m|} \, i \, F^{2,m}.
\end{align*}
Thus $\sqrt{2}\,U_1(g_k \, F)$ will be zero since $k+ |F_1| + |F_2| + \|F_1 \|Ê+ \|F_2 \|$ is even.

\medskip  When $a \neq 1$, the action of $L_{2a-1,m}$ on $g_k\, F$ results in zero hence
\begin{align*}
& \sqrt{2}Ê\, U_a \left( g_kÊ\, F \right) = -\frac{i}{2}Ê\, V_{2a-1,m} \, g_kÊ\, F \, e_{2a-1}^\perp \, e_m^\perp - \frac{1}{2} \, V_{2a,m} \, g_kÊ\, F \, e_{2a}^\perp \, e_m^\perp \\
&= -\frac{1}{2}Ê\, g_kÊ\left( i \, V_{2a-1,m} \, F \, e_{2a-1}^\perp \, e_m^\perp + V_{2a,m} \, F \, e_{2a}^\perp \, e_m^\perp \right) \\
&= -\frac{1}{2}Ê\, g_kÊ\left( (-1)^{\|F_{2a-1}\| + |F_{2a-1}|Ê+ \|ÊF_m\| + |F_m|} \, i + (-1)^{\|F_{2a}\|Ê+ |F_{2a}| + \| F_m\| + |F_m| + 1}\, i \right)  \, F^{2a,m} \\
&= -\frac{i}{2}Ê\, g_k\,(-1)^{ \|F_{2a-1}\| + |F_{2a-1}|Ê+ \|ÊF_m\| + |F_m|}Ê\left( 1 + (-1)^{\|F_{2a-1}\| + |F_{2a-1}|+ \|F_{2a}\|Ê+ |F_{2a}| + 1} \right)  \, F^{2a,m}.
\end{align*}
This will be zero when $\|ÊF_{2a-1}\|Ê+ \|F_{2a}\| + |F_{2a-1}|Ê+ |F_{2a}|$ is even, $\forall \, 2 \leqslant a \leqslant n$. Note that the highest weight vectors of weight $(k)'_-$ will not vanish under the action of $U_n$. 
\end{proof}

\begin{cor}
The polynomials $g_k \, F$ with $F = \prod_{s=1}^m F_s$, $F_s \in \left\{ L_s^\pm, M_s^\pm \right\}$ such that 
\begin{itemize}
\item $k+ |F_1| + |F_2| + \|F_1 \|Ê+ \|F_2 \|$ even
\item $\|ÊF_{2a-1}\|Ê+ \|F_{2a}\| + |F_{2a-1}|Ê+ |F_{2a}|$ even, $\forall \, 2 \leqslant a \leqslant n$,
\end{itemize}
are highest weight vectors, in $\mathcal{M}_k$ of weight $(k)'_+$. In particular, $g_{2k}Ê\, \prod_{s=1}^m L_s^+$ and $g_{2k+1}\, L_1^+\,L_2^- \, \prod_{s=3}^m L_s^+$ are highest weight vectors of weight $(2k)'_+$ resp. $(2k+1)'_+$.  
\end{cor}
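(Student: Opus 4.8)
The plan is to obtain this Corollary as a direct assembly of the two preceding lemmas, using the explicit description of the positive roots of $\gso(2n+1,\mathbb{C})$. Recall that the positive root vectors are precisely $X_{a,b}$ with $a<b$ (root $L_a-L_b$), $Y_{a,b}$ with $a\neq b$ (root $L_a+L_b$), and $U_a$ with $1\leqslant a\leqslant n$ (root $L_a$). Hence, to prove that $g_k\,F$ is a highest weight vector of weight $(k)'_+$ in $\mathcal{M}_k$, it suffices to check four things: that $g_k\,F$ is a nonzero element of $\mathcal{M}_k$; that it is a simultaneous eigenvector of $H_1,\ldots,H_n$ with eigenvalues $k+\frac12,\frac12,\ldots,\frac12$; that $X_{a,b}(g_k\,F)=0$ for $a<b$ and $Y_{a,b}(g_k\,F)=0$ for $a\neq b$; and that $U_a(g_k\,F)=0$ for $1\leqslant a\leqslant n$.

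The first point is immediate: $g_k$ is a basic $k$-homogeneous monogenic and right multiplication by the constant Clifford idempotent $F$ commutes with the left action of $\p$ and preserves degree. For the second and third points I would invoke the even-dimensional analysis verbatim. The operators $H_a$, $X_{a,b}$, $Y_{a,b}$ for $1\leqslant a,b\leqslant n$ are built from the generators $dR(e_{c,d})$ with $c,d\leqslant 2n<m$; consequently, by Lemma \ref{lem:rightmultiplicationL}, when such an operator acts on $g_k\,F=g_k\,F_1\cdots F_{2n}\,F_m$ the trailing factor $F_m$ is a pure spectator, carried along unchanged and never entering any sign. The parity bookkeeping is then exactly that of Lemma \ref{lem:WeightVector} and of the even-dimensional highest-weight lemma, whose hypotheses for the coordinates $1,\ldots,2n$ coincide with the two displayed conditions here. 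So $g_k\,F$ is a weight vector of weight $(k)'_+$ annihilated by every $X_{a,b}$ ($a<b$) and every $Y_{a,b}$ ($a\neq b$), with no new computation required.

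The fourth point is precisely the statement of the Lemma immediately preceding this Corollary: under exactly the same two parity conditions, $U_a(g_k\,F)=0$ for all $1\leqslant a\leqslant n$ (that lemma also records that the $(k)'_-$-vectors fail this at $a=n$, which is why only the $(k)'_+$ family yields highest weight vectors in odd dimension). Combining the four points, $g_k\,F$ is a highest weight vector of weight $(k)'_+$ in $\mathcal{M}_k$. For the two explicit instances one simply substitutes: taking $F_s=L_s^+$ for all $s$ gives $|F_s|=\|F_s\|=0$, so with $k$ even both conditions read ``$0$ even''; taking $F_1=L_1^+$, $F_2=L_2^-$, $F_s=L_s^+$ for $s\geqslant 3$ gives $|F_2|=1$ and all other $|F_s|,\|F_s\|$ zero, so with $k$ odd the first condition reads ``$k+1$ even'' and the rest ``$0$ even''. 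Hence $g_{2k}\,\prod_{s=1}^m L_s^+$ and $g_{2k+1}\,L_1^+\,L_2^-\,\prod_{s=3}^m L_s^+$ are highest weight vectors of weight $(2k)'_+$ resp. $(2k+1)'_+$. The only point that needs genuine (if light) care is the spectator claim for $F_m$; once it is spelled out, the Corollary carries no further content beyond the two lemmas it packages.
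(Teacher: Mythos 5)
Your proposal is correct and takes essentially the same route as the paper: the corollary is stated there without proof precisely because it is the assembly of the even-dimensional weight and highest-weight lemmas (for $H_a$, $X_{a,b}$ with $a<b$, and $Y_{a,b}$) with the immediately preceding lemma giving $U_a(g_k\,F)=0$, which is exactly what you carry out. Your explicit remark that $F_m$ is a spectator for the operators built from $dR(e_{c,d})$ with $c,d\leqslant 2n$ is a small but worthwhile detail the paper leaves implicit.
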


Note that the choice for the last factor $F_m \in  \left\{ L_m^\pm, M_m^\pm \right\}$ does not change the results. 

\medskip We again count how many highest weight vectors $g_k\,F$, $F = \prod_{s=1}^m F_s$, with weight $(k)'_+$ we find: for each $F_{2a-1}\,F_{2a}$, $1 \leqslant a \leqslant n$, we have $8$ possible combinations, namely 
$$
\left\{ L_{2a-1}^+\, L_{2a}^+, L_{2a-1}^- \, L_{2a}^-, L_{2a-1}^+\,M_{2a}^+, L_{2a-1}^- \,M_{2a}^-, M_{2a-1}^+\,L_{2a}^+, M_{2a-1}^+ \, M_{2a}^+, M_{2a-1}^- \, L_{2a}^-, M_{2a-1}^- \, M_{2a}^- \right\},
$$
and for $F_m$ we have four possible choices $L_m^\pm$, $M_m^\pm$. Combining this, we find $8^n \, 2^2 = 2^{3n+2} = 2^{2m-n}$ isomorphic irreducible representations with highest weight $(k)'_+$, each of which has dimension 
$$
2^n \, \binom{k+m-2}{k}.
$$
Hence the total dimension of all isomorphic irreducible representations is 
$$
2^{4n+2} \, \binom{k+m-2}{k} = \dim_{\mathbb{C}} \, \mathcal{M}_k,
$$
i.e. the dimensional analysis shows that $\mathcal{M}_k$ may be decomposed as $2^{3n+2}$ isomorphic irreducible representations with highest weight $(k)'_+$. 

\begin{defi}
We define the spinorspace $\mathbb{S}_{2n+1}$ as the image under $\gso(m,\mathbb{C})$ of the idempotent $\prod_{s=1}^m L_s^+$:
$$
\mathbb{S}_{2n+1} = \gso(m,\mathbb{C}) \left( \textup{span}_{\mathbb{C}} \left\{ L_1^+ \,L_2^+ \ldots \, L_{2n}^+ \, L_{2n+1}^+ \right\} \right).
$$
\end{defi}
The element $L_1^+ \,L_2^+ \ldots \, L_{2n}^+  \, L_{2n+1}^+$ is a highest weight vector with weight $(0)'_+ = \left( \frac{1}{2}, \ldots, \frac{1}{2}\right)$ and thus generates an irreducible representation with the same weight. 

\begin{example}
Let $m = 5$ (i.e. $n=2$) and consider $L = L_1^+ \,L_2^+ \, L_3^+ \, L_4^+ \, L_5^+$. We denote $dR(e_{a,b})$ in short as $(a,b)$. The Lie algebra $\gso(5,\mathbb{C})$ is given in this context by the span over $\mathbb{C}$ of the ten elements $(1,2)$, $(1,3)$, $(1,4)$, $(1,5)$, $(2,3)$, $(2,4)$, $(2,5)$, $(3,4)$, $(3,5)$ and $(4,5)$. 
The idempotents involved interact in the following way under the action of $\gso(m,\mathbb{C})$:
\begin{center}
\begin{tikzpicture}[->,>=stealth',shorten >=1pt,auto,node distance=3cm,
  thick,main node/.style={rectangle,
           rounded corners,
           draw=black, very thick,
           text width=7em,
           minimum height=2em,
           text centered}]

\begin{scope}[]

\node[main node] (Linksboven) {$L_1^+\,L_2^+\,L_3^+ \, L_4^+\,L_5^+$};
\node[main node,below=3cm of Linksboven] (Linksonder) {$L_1^+\,L_2^- \,L_3^- \, L_4^-\,L_5^-$};
\node[main node, right=5cm of Linksboven] (Rechtsboven) {$L_1^+\,L_2^+\,L_3^+ \, L_4^-\,L_5^-$};
\node[main node, right=5cm of Linksonder] (Rechtsonder) {$L_1^+\,L_2^-\,L_3^- \, L_4^+\,L_5^+$};
\path[every node/.style={font=\sffamily\small}]
    (Linksboven) 
       edge[<->]	node[left] (dummyL) {(1,5)} 
    		      	node[below=.5cm of dummyL,left] {(2,5)}
		 	(Linksonder)
       edge [<->] node[above] {(3,5), (4,5)} (Rechtsboven)
       edge [<->] node[above] {(1,3), (2,3)} 
       		        node[below]{(1,4), (2,4)} (Rechtsonder)
       edge [loop above] node {(1,2), (3,4)} (Linksboven)
    (Linksonder) 
       edge[<->] node[below] {(3,5), (4,5)} (Rechtsonder)
       edge[<->] (Rechtsboven)
       edge [loop below] node {} (Linksonder)
    (Rechtsboven)
      edge[<->]	node[right] (dummyR) {(1,5)} 
    		      	node[below=.5cm of dummyR,right] {(2,5)} 
		 	(Rechtsonder)
       edge [loop above] node {} (Rechtsboven)
    (Rechtsonder)
       edge [loop below] node {} (Rechtsonder)      
      ;
\end{scope}
\end{tikzpicture}
\end{center}
Hence
$$
\mathbb{S}_5 = \textup{span}_{\mathbb{C}} \left\{ L_1^+ \,L_2^+ \, L_3^+ \, L_4^+ \, L_5^+, L_1^+ \,L_2^- \, L_3^- \, L_4^+ \, L_5^+, L_1^+ \,L_2^- \, L_3^- \, L_4^- \, L_5^-, L_1^+ \,L_2^+ \, L_3^+ \, L_4^- \, L_5^- \right\}.
$$
The spinorspace $\mathbb{S}_{5}$ is $2^{2}$-dimensional. 
\end{example}

In general, the rotations $dR(e_{2a-1,2a})$, $a= 1,\ldots,n$, acting on an idempotent return the same idempotent up to a multiplicative complex factor. Again, we find that $dR(e_{a,b})$ with $1 \leqslant a, b \leqslant n$, changes the sign of an even number of $L_i$'s. The additional rotations $dR(e_{2a-1,m})$ and $dR(e_{2a,m})$, with $1 \leqslant a \leqslant n$, act as follows on $L$:
$$
L_1^+ \, L_2^+ \ldots \, L_{2n}^+ \, L_{2n+1}^+ \; \mapsto \; L_1^+ \, L_2^+ \ldots L_{2a-1}^+ \, L_{2a}^- \, \ldots \, L_{2n}^- \, L_{2n+1}^-.
$$
The rotation always leaves $L_1^+$ invariant. The resulting idempotent will always have an even number of minus-signs. Starting from the idempotent $L$ with all plus-signs, we thus get all possible idempotents of the following form:
$$
L_1^+ \underbrace{ \, . \quad . \, } \; \underbrace{ \, . \quad . \, } \; \ldots \; \underbrace{ \, . \quad . \, }
$$
where each place $\; \underbrace{ \, . \quad . \, } \; $ consists of either $L_{2a}^+ \, L_{2a+1}^+$ or $L_{2a}^- \, L_{2a+1}^-$, $1 \leqslant a \leqslant n$. We thus get $2^{n}$ spinors and we have the following weight space decomposition
$$
\mathbb{S}_{2n+1} = \bigoplus V_{\left( \pm \frac{1}{2},  \pm \frac{1}{2},\ldots, \pm \frac{1}{2}\right)},
$$
where the sum goes over all weights with an even number of minus-signs.

\begin{example}
Let $m = 7$ (i.e. $n=3$) and consider $L = L_1^+ \,L_2^+ \, L_3^+ \, L_4^+ \, L_5^+\, L_6^+\,L_7^+$.  We will again denote $dR(e_{a,b})$ in short as $(a,b)$. The Lie algebra $\gso(5,\mathbb{C})$ is given in this context by $21$ elements and the corresponding spinorspace will be $8$-dimensional. The idempotents involved interact in the following way under the action of $\gso(m,\mathbb{C})$:
\begin{center}
\begin{tikzpicture}[->,>=stealth',shorten >=1pt,auto,node distance=4cm,
  thick,main node/.style={
           text width=9em,
           minimum height=2em,
           font=\small,
           text centered}]

\begin{scope}[scale=.75]
\node[draw=none,minimum size=11cm,regular polygon,regular polygon sides=8] (a) {};

\node[main node] (P1) at (a.corner 1) {$L_1^+\, L_2^+ \, L_3^+\, L_4^+\, L_5^+ \,L_6^+\, L_7^+$};
\node[main node] (P2) at (a.corner 8) {$L_1^+\, L_2^- \, L_3^- \, L_4^+\, L_5^+ \, L_6^+\, L_7^+$};
\node[main node] (P3) at (a.corner 7) {$L_1^+\, L_2^+ \, L_3^+\, L_4^-\, L_5^- \, L_6^+\, L_7^+$};
\node[main node] (P4) at (a.corner 6) {$L_1^+\, L_2^- \, L_3^-\, L_4^-\, L_5^- \, L_6^+\, L_7^+$};
\node[main node] (P5) at (a.corner 5) {$L_1^+\, L_2^+ \, L_3^+\, L_4^+\, L_5^+ \, L_6^-\, L_7^-$};
\node[main node] (P6) at (a.corner 4) {$L_1^+\, L_2^+ \, L_3^+\, L_4^- \, L_5^- \,L_6^- \,  L_7^-$};
\node[main node] (P7) at (a.corner 3) {$L_1^+\, L_2^- \, L_3^-\, L_4^- \, L_5^- \, L_6^- \, L_7^-$};
\node[main node] (P8) at (a.corner 2) {$L_1^+\, L_2^- \, L_3^-\, L_4^+ \, L_5^+ \, L_6^- \, L_7^-$};

\path[every node/.style={font=\sffamily\small}]
(P1) 
	edge[<->,loop above] node{$\mathop{}^{(1,2)}_{(3,4),(5,6)}$} (P1)
	edge[<->] node{$\mathop{}^{(1,3), (2,3)}_{(1,4), (2,4)}$} (P2)
	edge[<->,near end] node{$\mathop{}^{(3,5), (3,6)}_{(4,5), (4,6)}$} (P3)
	edge[<->,below] node{$\mathop{}^{(1,5), (2,5)}_{(1,6), (2,6)}$} (P4)
	edge[<->,near end] node{$\mathop{}^{(5,7)}_{(6,7)}$} (P5)
	edge[<->,near end,above right] node{$\mathop{}^{(3,7)}_{(4,7)}$} (P6)
	edge[<->,near end] node{$\mathop{}^{(1,7)}_{(2,7)}$} (P7)
(P2) 
	edge[<->,bend left=45] node{$\mathop{}^{(1,5), (1,6)}_{(2,5), (2,6))}$} (P3)
	edge[<->,bend left=45,near end] node{$\mathop{}^{(3,5), (3,6)}_{(4,5), (4,6))}$} (P4)
	edge[<->] node{$\mathop{}^{(1,7)}_{(2,7)}$} (P6)
	edge[<->] node{$\mathop{}^{(3,7)}_{(4,7)}$} (P7)
	edge[<->,midway,above] node{$\mathop{}^{(5,7)}_{(6,7)}$} (P8)
	edge[loop above] node{} (P2)
(P3) 
	edge[loop above] node{} (P3)
	edge[<->,left,near start,below] node{$\mathop{}^{(1,3), (2,3)}_{(1,4), (2,4)}$} (P4)
	edge[<->,above] node{$\mathop{}^{(3,7)}_{(4,7)}$} (P5)
	edge[<->,near start,above] node{$\mathop{}^{(5,7)}_{(6,7)}$} (P6)
	edge[<->,very near start,above] node{$\mathop{}^{(1,7)}_{(2,7)}$} (P8)
(P4) 
	edge[loop below] node{} (P4)
	edge[<->,bend left=45] node{$\mathop{}^{(1,7)}_{(2,7)}$} (P5)
	edge[<->,very near end,right] node{$\mathop{}^{(5,7)}_{(6,7)}$} (P7)
	edge[<->,very near end,below right] node{$\mathop{}^{(3,7)}_{(4,7)}$} (P8)
(P5)  
	edge[loop below] node{} (P5)
	edge[<->,bend left=45] node{$\mathop{}^{(3,5), (3,6)}_{(4,5), (4,6))}$} (P6)
	edge[<->,below,near start] node{$\mathop{}^{(1,5), (2,5)}_{(1,6), (2,6)}$} (P7)
	edge[<->,right] node{$\mathop{}^{(1,3), (2,3)}_{(1,4), (2,4)}$} (P8)
(P6) 
	edge[loop above] node{} (P6)
	edge[<->,bend left=45,right] node{$\mathop{}^{(1,3), (2,3)}_{(1,4), (2,4)}$} (P7)
	edge[<->,very near end] node{$\mathop{}^{(1,5), (2,5)}_{(1,6), (2,6)}$} (P8)
(P7) 
	edge[loop above] node{} (P7)
	edge[<->,bend left=45] node{$\mathop{}^{(3,5), (3,6)}_{(4,5), (4,6)}$} (P8)
(P8) 
	edge[loop above] node{} (P8)
;
\end{scope}
\end{tikzpicture}
\end{center}
Hence
\begin{align*}
\mathbb{S}_7 = \textup{span}_{\mathbb{C}} \left\{ \right. & L_1^+ \,L_2^+ \, L_3^+ \, L_4^+ \, L_5^+ \, L_6^+ \, L_7^+, \;
L_1^+\, L_2^- \, L_3^- \, L_4^+\, L_5^+ \, L_6^+\, L_7^+, \; 
L_1^+\, L_2^+ \, L_3^+\, L_4^-\, L_5^- \, L_6^+\, L_7^+, \\	
&
L_1^+\, L_2^- \, L_3^-\, L_4^-\, L_5^- \, L_6^+\, L_7^+, \; 
L_1^+\, L_2^+ \, L_3^+\, L_4^+\, L_5^+ \, L_6^-\, L_7^-, \; 
L_1^+\, L_2^+ \, L_3^+\, L_4^- \, L_5^- \,L_6^- \,  L_7^-, \\
& 
L_1^+\, L_2^- \, L_3^-\, L_4^- \, L_5^- \, L_6^- \, L_7^-, \; 
L_1^+\, L_2^- \, L_3^-\, L_4^+ \, L_5^+ \, L_6^- \, L_7^- \left. \right\}.
\end{align*}
We indeed find an $8$-dimensional spinorspace $\mathbb{S}_{7}$. 
\end{example}

\section{Conclusion and future research}
The space $\mathcal{M}_k$ of discrete $k$-homogeneous monogenic polynomials is a reducible representation of $\gso(m,\mathbb{C})$, which can, in the odd-dimensional case $m = 2n+1$, be decomposed into $2^{2m-n}$ isomorphic copies of the irreducible $\gso(m,\mathbb{C})$-representation with highest weight $\left( k+\frac{1}{2}, \frac{1}{2},\ldots,\frac{1}{2} \right)$ and in the even-dimensional setting $m =2n$, we find $2^{2m-n}$ isomorphic irreducible representations with highest weight $\left( k+\frac{1}{2},\frac{1}{2},\ldots, \frac{1}{2}\right)$ combined with $2^{2m-n}$ irreps of highest weight $\left( k+ \frac{1}{2}, \frac{1}{2},\ldots, \frac{1}{2},-\frac{1}{2}\right)$. This is done by means of an appropriate amount of idempotents. 

\medskip Let $g_k =  \left( \xi_2 - \xi_1 \right) \left( \xi_2 + \xi_1 \right)\left( \xi_2 - \xi_1 \right) \left( \xi_2 + \xi_1 \right) \ldots[1]$, $(k \text{ factors})$, be a discrete homogeneous monogenic function of degree $k$ and let 
\begin{align*}
L^\pm_{2a-1} &= \left( \e_{2a-1}^+ \e_{2a-1}^- \pm i\, \e_{2a-1}^+ \right), & 
L^\pm_{2a} &= \left( \e_{2a}^+ \e_{2a}^- \pm \e_{2a}^+ \right), \\
M^\pm_{2a-1} &= \left( \e_{2a-1}^- \e_{2a-1}^+ \pm i\, \e_{2a-1}^- \right), &
M^\pm_{2a} &= \left( \e_{2a}^- \e_{2a}^+ \pm \e_{2a}^- \right),
\end{align*}

Denote $\| L^\pm_a\| = 0$, $\|M^\pm_aÊ\| = 1$ and $| L^+_a | = |M_a^- | = 0$ and $|L^-_a| = |M_a^+| = 1$.

\medskip In even dimension $m = 2n$, the polynomial $g_k \, F \in \mathcal{M}_k$, $F = \prod_{s=1}^m F_s$ with $F_s \in \left\{ L_s^\pm, M_s^\pm \right\}$, is a weight vector of $\gso(m,\mathbb{C})$ with 
\begin{itemize}
\item weight $(k)'_+$ when $k+ |F_1| + |F_2| + \|F_1 \|Ê+ \|F_2 \|$ is even and $\|ÊF_{2a-1}\|Ê+ \|F_{2a}\| + |F_{2a-1}|Ê+ |F_{2a}|$ is even for $2 \leqslant a \leqslant n$.

\item weight $(k)'_-$ when $k+ |F_1| + |F_2| + \|F_1 \|Ê+ \|F_2 \|$ is even, $\|ÊF_{2a-1}\|Ê+ \|F_{2a}\| + |F_{2a-1}|Ê+ |F_{2a}|$ is even for $2 \leqslant a \leqslant n-1$ and $\|ÊF_{2n-1}\|Ê+ \|F_{2n}\| + |F_{2n-1}|Ê+ |F_{2n}|$ is odd.\end{itemize}
We find $2^{2m-n}$ highest weight vectors in $\mathcal{M}_{k}$ with weight $\left(kÊ\right)'_+$ and $2^{2m-n}$ weight vectors, with weight $(k)'_-$.

\medskip In odd dimensions $m=2n+1$, the polynomial $g_k \, F \in \mathcal{M}_k$, $F = \prod_{s=1}^m F_s$ with $F_s \in \left\{ L_s^\pm, M_s^\pm \right\}$, is a weight vector of $\gso(m,\mathbb{C})$ with weight $(k)'_+$ when $k+ |F_1| + |F_2| + \|F_1 \|Ê+ \|F_2 \|$ is even and $\|ÊF_{2a-1}\|Ê+ \|F_{2a}\| + |F_{2a-1}|Ê+ |F_{2a}|$ is even for $2 \leqslant a \leqslant n$. We find $2^{2m-n}$ highest weight vectors in $\mathcal{M}_{k}$ with weight $\left(kÊ\right)'_+$.

\medskip We have proven throughout this article how the spaces $\mathcal{H}_k$ and $\mathcal{M}_k$ of harmonic resp. monogenic discrete $k$-homogeneous polynomials may be decomposed into irreducible representations of $\gso(m,\mathbb{C})$. However, because of the presence of the basiselements $e_a$ and $e_a^\perp$ in the definition of the generators of the rotations, the spinorspace is no maximal left ideal. In future research we will investigate other possibilities to define rotations and the spinorspace in the hopes of writing the spinorspace as maximal left ideal. An equivalent description of $\mathcal{H}_k$ and $\mathcal{M}_k$ as $SO(m)$-representations is also still work in progress.

\section{Acknowledgments}
The first author acknowledges the support of the Research Foundation - Flanders (FWO), grant no. FWO13$\backslash$PDO$\backslash$039.

\end{document}